\numberwithin{equation}{section}
\let\al=\alpha
\let\b=\beta
\let\f=\frac
\let\om=\omega
\let\Om=\Omega
\let\na=\nabla
\let\pa=\partial
\let\ep=\epsilon
\def\cN{\mathcal{N}}
\def\R{\mathbf R}
\def\eqdef{\buildrel\hbox{\footnotesize def}\over =}
\def\bbT{\mathbb{T}}
\newcommand{\beq}{\begin{equation}}
\newcommand{\eeq}{\end{equation}}
\newcommand{\ben}{\begin{eqnarray}}
\newcommand{\een}{\end{eqnarray}}
\newcommand{\beno}{\begin{eqnarray*}}
\newcommand{\eeno}{\end{eqnarray*}}
\newtheorem{theorem}{Theorem}[section]
\newtheorem{lemma}[theorem]{Lemma}
\newtheorem{proposition}[theorem]{Proposition}
\newtheorem{remark}[theorem]{Remark}
\newtheorem{Theorem}{Theorem}[section]
\newtheorem{Corollary}[Theorem]{Corollary}
\begin{document}

\title{Enhanced dissipation for the 2D Couette flow in critical space}
\author{Nader Masmoudi}
\address{Department of Mathematics, New York University in Abu Dhabi, Saadiyat Island, P.O. Box 129188, Abu Dhabi, United Arab Emirates. Courant Institute of Mathematical Sciences, New York University, 251 Mercer Street, New York, NY 10012, USA,}
\email{masmoudi@cims.nyu.edu}
\author{Weiren Zhao}
\address{Department of Mathematics, New York University in Abu Dhabi, Saadiyat Island, P.O. Box 129188, Abu Dhabi, United Arab Emirates.}
\email{zjzjzwr@126.com, wz19@nyu.edu}
\date{\today}

\maketitle

\begin{abstract}
We consider the 2D incompressible Navier-Stokes equations on $\bbT\times \R$, with initial vorticity that is $\delta$ close in $H^{log}_xL^2_{y}$ to $-1$(the vorticity of the Couette flow $(y,0)$). 
We prove that if $\delta\ll \nu^{1/2}$, where $\nu$ denotes the viscosity, then the solution of the Navier-Stokes equation approaches some shear flow which is also close to Couette flow for time $t\gg \nu^{-1/3}$ by a mixing-enhanced dissipation effect and then converges back to Couette flow when $t\to +\infty$. In particular, we show the nonlinear enhanced dissipation and the inviscid damping results in the almost critical space $H^{log}_xL^2_{y}\subset L^2_{x,y}$. 
\end{abstract}

\section{introduction}
In this paper, we consider the 2D incompressible Navier-Stokes equations on $\bbT\times \R$:
\beq
\label{eq:NS}
\left\{
\begin{array}{l}
\pa_tU+U\cdot\nabla U+\nabla P-\nu\Delta U=0,\\
\na\cdot U=0,\\
U|_{t=0}=U_{in}(x,y).
\end{array}\right.
\eeq
where $U=(U^1,U^2)$ and $P$ denote the velocity and the pressure of the fluid respectively. Let $\Om=\pa_xU^2-\pa_yU^1$ be the vorticity, which satisfies
\beq
\label{eq:vorticity}
\Om_t+U\cdot\nabla \Om-\nu\Delta \Om=0.
\eeq
The Couette flow $(y,0)$ is a steady solution of \eqref{eq:NS} with $\Om=-1$. 

We introduce the perturbation. Let $U=(y,0)+V$ and $\Om=-1+\om$, then $\om$ satisfies 
\ben\label{eq:NS2}
\left\{
\begin{array}{l}
\pa_t\omega+y\pa_x\omega-\nu\Delta\om=-V\cdot\na \om,\\ 
V=\na^{\bot}(-\Delta)^{-1}\om,\\
\om|_{t=0}=\om_{in}(x,y),
\end{array}\right.
\een
and $V$ satisfies
\ben\label{eq:NS3}
\left\{
\begin{array}{l}
\pa_tV+y\pa_xV-\nu\Delta V+\na p=-V\cdot\na V-(V_2,0),\\
\na\cdot V=0,\\
V|_{t=0}=V_{in}(x,y).
\end{array}\right.
\een
The enstrophy conservation law $\|\om(t)\|_{L^2}^2+2\nu\int_0^t\|\na\om(s)\|_{L^2}^2ds=\|\om_{in}\|_{L^2}^2$ implies that the solution of \eqref{eq:NS} remains $\delta$-close in $L^2$ to the Couette flow if the initial vorticity is $\delta$-close in $L^2$ to -1. In this paper, we focus on asymptotic stability of the 2D Couette flow. For the linearized equation 
\ben\label{eq:LNS}
\left\{
\begin{array}{l}
\pa_t\omega+y\pa_x\omega-\nu\Delta\om=0,\\
\om|_{t=0}=\om_{in}(x,y),
\end{array}\right.
\een
it is easy to obtain that 
\ben\label{eq: ED_and_ID}
\|\om_{\neq}\|_{L^2_{x,y}}\leq C\|\om_{in}\|_{L^2_{x,y}}e^{-c\nu t^3}\quad \text{and}\quad
\|V_{\neq}\|_{L^2_{t,x,y}}\leq C\|\om_{in}\|_{L^2_{x,y}},
\een
here we use the notation $f_{\neq}(t,x,y)=f(t,x,y)-\f{1}{|\mathbb{T}|}\int_{\mathbb{T}}f(t,x,y)dx$. 
The first inequality in \eqref{eq: ED_and_ID} is the enhanced dissipation and the second one is the inviscid damping. 

However the nonlinear interaction may affect this linear behavior which leads to the fact that the nonlinear enhanced dissipation and inviscid damping are sensitive to the regularity of the perturbation and/or its smallness. Then an interesting question can be proposed in the following two ways:

1. {\it Given a norm $\|\cdot\|_{X}$($X\subset L^2$), determine a $\beta=\beta(X)$ so that for the initial vorticity $\|\om_{in}\|_{X}\ll \nu^{\beta}$ and for $t>0$,
\ben\label{eq: enha-invis}
\|\om_{\neq}\|_{L^2_{x,y}}\leq C\|\om_{in}\|_{X}e^{-c\nu^{\f13}t}\quad \text{and}\quad
\|V_{\neq}\|_{L^2_{t,x,y}}\leq C\|\om_{in}\|_{X},
\een
or the weak enhanced dissipation type estimate
\ben\label{eq: enha-invis-weak}
\|\om_{\neq}\|_{L^2_{t,x,y}}\leq C\nu^{-\f16}\|\om_{in}\|_{X}
\een
holds for the Navier-Stokes equation \eqref{eq:NS2}.}

2. {\it Given $\beta$, is there an optimal function space $X\subset L^2$ so that if the initial vorticity satisfies $\|\om_{in}\|_{X}\ll \nu^{\beta}$, then \eqref{eq: enha-invis} or \eqref{eq: enha-invis-weak} hold for the Navier-Stokes equation \eqref{eq:NS2}?}

These two problems(find the smallest $\beta$ or find the largest function space $X$) are related to each other, since one can gain regularity in a short time by a standard time-weight argument if the initial perturbation is small enough. 

For $\beta=0$, Bedrossian, Masmoudi and Vicol \cite{BMV2016} showed that if $X$ is taken as Gevery-$m$ with $m<2$, then \eqref{eq: enha-invis-weak} holds. 

For $\beta=\f12$, Bedrossian, Vicol and Wang \cite{BVW2018} proved the nonlinear enhanced dissipation and inviscid damping for the perturbation of initial vorticity in $H^s, s>1$. 

The problem is also related to the stability threshold problem for Couette flow. One may refer to \cite{BGM-1,BGM-2,BGM2017,BMV2016,Braz2004,BVW2018,CLWZ2018,DingLin,WZ2018} for more details. 

Our main goal is to prove that the nonlinear enhanced dissipation and inviscid damping estimates \eqref{eq: enha-invis} hold for the nonlinear equations if the initial vorticity is $\nu^{1/2}$-close to -1 in $H^{log}_xL^2_y\eqdef\big\{f:~\|f\|_{H^{log}_xL^2_y}\eqdef\|\ln(e+|D_x|)f\|_{L^2_{x,y}}<\infty\big\}$. 

Our main result is: 
\begin{theorem}\label{thm: main}
Let $\om$ be a solution of \eqref{eq:NS2} with $\nu<1$. Then there exists $\epsilon_0>0$, such that if $\|V_{in}\|_{L^2_{x,y}}+\|\om_{in}\|_{H^{log}_xL^2_y}\leq \epsilon_0 \nu^{\beta}$ for $\beta\geq 1/2$, then 
\beno
\|\om_{\neq}(t)\|_{H^{log}_xL^2_y}\leq Ce^{-c\nu^{1/3}t}\|\om_{in}\|_{H^{log}_xL^2_y},\quad \|\om_0(t)\|_{L^2_y}\leq C\|\om_{in}\|_{L^2_{x,y}}.
\eeno
where $\om_0(t,y)=\f{1}{|\mathbb{T}|}\int_{\mathbb{T}}\om(t,x,y)dx$ and $\om_{\neq}(t,x,y)=\om(t,x,y)-\om_0(t,y)$. \\
Moreover we have the inviscid damping type estimate,
\begin{align*}
&\int_{0}^{+\infty}\|{V}^2_{\neq}(s)\|_{L^{\infty}_{x,y}}^2ds
+\int_{0}^{+\infty}\||D_x|^{\f12}{V}_{\neq}^2(s)\|_{L^2_xL^{\infty}_y}^2ds
+\int_{0}^{+\infty}\|\pa_x{V}_{\neq}^1(s)\|_{L^2_{x,y}}^2ds
\leq C\|\om_{in}\|_{H^{log}_xL^2_y}^2.
\end{align*} 
The constants $c,\, C$ are independent of $\nu$.
\end{theorem}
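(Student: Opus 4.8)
The plan is to combine a moving-frame reduction with a time-dependent Fourier-multiplier (hypocoercivity) energy method, tuned carefully to the critical regularity $H^{log}_xL^2_y$. First I would pass to coordinates moving with the background shear, setting $z=x-ty$ and $f(t,z,y)=\om(t,z+ty,y)$, so that the transport $y\pa_x$ disappears and $\D$ becomes $\pa_z^2+(\pa_y-t\pa_z)^2$. Splitting off the $z$-average $\om_0$, the nonzero part $f_{\neq}$ then solves a drift-diffusion equation whose Fourier symbol in $(z,y)\mapsto(k,\eta)$ is $\nu\big(k^2+(\eta-kt)^2\big)$, and the elementary bound $\int_0^t\big(k^2+(\eta-ks)^2\big)\,ds\geq c\,k^2t^3$ is the seed of enhanced dissipation. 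The zero mode obeys the one-dimensional heat equation $\pa_t\om_0-\nu\pa_y^2\om_0=-\pa_y\big(V^2_{\neq}\om_{\neq}\big)_0$, forced only by quadratic interactions of the nonzero modes (here I use $\na\cdot V=0$ and $V^2_0=0$).

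Next I would construct a scalar multiplier $M(t,k,\eta)$, bounded above and below by absolute constants, satisfying $\nu\big(k^2+(\eta-kt)^2\big)-\pa_t\ln M\geq c\,\nu^{1/3}|k|^{2/3}$ for all $k\neq0$. Such an $M$ — large near the critical time $t\approx\eta/k$ and relaxing away from it, so that the short resonant window is paid for by the surplus dissipation elsewhere — upgrades the $e^{-c\nu t^3}$ short-time bound to the long-time rate $e^{-c\nu^{1/3}t}$. Pairing $M$ with the logarithmic weight $\ln(e+|D_x|)$ dictated by the norm, I set $E(t)=\|\ln(e+|D_x|)\,M f_{\neq}(t)\|_{L^2}^2$, together with a companion functional for $\|\om_0\|_{L^2_y}$. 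For the linearized flow one then obtains $\tfrac{d}{dt}E+2c\,\nu^{1/3}\|\,|D_x|^{1/3}\ln(e+|D_x|)f_{\neq}\|_{L^2}^2\leq 0$ plus nonnegative Cauchy--Kovalevskaya (CK) terms of the form $\tfrac{-\pa_t M}{M}|Mf_{\neq}|^2$ that will later generate the inviscid-damping output.

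The nonlinear step is to estimate the contribution of $-V\cdot\na\om$ to $\tfrac{d}{dt}E$, after writing $V=V_0+V_{\neq}$ and $\om=\om_0+f_{\neq}$. The transport $V_0^1\pa_x f_{\neq}$ by the mean shear merely perturbs the effective shear rate from $1$ to $1-\om_0$; since $\om_0$ stays $O(\epsilon_0\nu^{1/2})$ the enhanced-dissipation rate survives, and this term is absorbed either by adapting the frame to $y+V_0^1$ or by a direct estimate exploiting that $V_0^1=\pa_y^{-1}\om_0$ is one degree smoother. The genuine enemies are the reaction term $V^2_{\neq}\pa_y\om_0$ and the self-interaction $V_{\neq}\cdot\na f_{\neq}$; these I would control by paraproduct/bilinear estimates tailored to $H^{log}_xL^2_y$, exploiting the inviscid-damping decay encoded in $\hat V^2=ik\big(k^2+(\eta-kt)^2\big)^{-1}\hat f$ together with the factor $M$. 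The single logarithm $\ln(e+|D_x|)$ is precisely what makes the borderline frequency sums converge, compensating for the loss of the algebra property at $L^2_y$-regularity, and I expect this nonlinear step — orchestrating inviscid damping, enhanced dissipation, and exactly one logarithm of regularity with no derivative surplus — to be the main obstacle.

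Finally I would close by a continuity argument: assuming $E(t)^{1/2}+\|\om_0(t)\|_{L^2_y}\leq C\epsilon_0\nu^{\beta}$ on a maximal interval, I would show the nonlinear contributions strictly improve the bound once $\epsilon_0$ is small. The threshold $\beta=1/2$ is forced by the zero mode: from its energy identity, $\|\om_0(t)\|_{L^2_y}^2\leq\|\om_0(0)\|^2+\tfrac{1}{2\nu}\int_0^\infty\|V^2_{\neq}\om_{\neq}\|_{L^2_{x,y}}^2\,ds\lesssim\delta^2+\tfrac{1}{\nu}\big(\sup_s\|\om_{\neq}\|_{L^2}^2\big)\int_0^\infty\|V^2_{\neq}\|_{L^\infty}^2\,ds\lesssim\delta^2\big(1+\delta^2/\nu\big)$, with $\delta=\|\om_{in}\|_{H^{log}_xL^2_y}$, so consistency demands $\delta\lesssim\nu^{1/2}$. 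The three inviscid-damping integrals in the statement are then read off from the accumulated CK terms together with the identities $V^2=\pa_x(-\D)^{-1}\om$ and $V^1=-\pa_y(-\D)^{-1}\om$.
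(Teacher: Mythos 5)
Your overall architecture (moving frame, zero/nonzero mode splitting, a continuity/bootstrap closure, reading the inviscid-damping integrals off accumulated dissipation terms, and the count $\delta^2(1+\delta^2/\nu)$ forcing $\beta\geq 1/2$) is consistent with the theorem, but the route is genuinely different from the paper's. You propose a hypocoercivity energy method built on a time-dependent multiplier $M(t,k,\eta)$ with $\nu\big(k^2+(\eta-kt)^2\big)-\pa_t\ln M\geq c\,\nu^{1/3}$. The paper constructs no such multiplier: it solves the linearized equation explicitly in the sheared frame, $\widetilde{W}(t,\al,\eta)=e^{-\nu(\f13\al^2t^3-\eta\al t^2+\eta^2t+\al^2t)}\widetilde{\om}_{in}(\al,\eta)$, extracts from this formula a package of space-time linear estimates (Lemmas \ref{Lem:lin-enha} and \ref{Lem:lin-invdam}), and then treats the full nonlinearity as a Duhamel forcing measured in $L^1_tL^2_{x,y}$, iterating the resulting decay over windows of length $t_0\sim\nu^{-1/3}$ (Proposition \ref{Prop:btsp}). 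The Duhamel route buys something concrete at this regularity: the nonlinearity is only ever propagated by the constant-coefficient semigroup $S(t,s)$, so one never commutes an $\eta$-dependent multiplier past coefficients such as $V_0^1(t,y)$ or $\pa_y\om_0(t,y)$, which are only $L^2$-based in $y$. In your scheme the term $V_0^1\pa_xf_{\neq}$ is not harmless: it is skew-symmetric in the weighted energy only if $M$ is independent of $\eta$, and the commutator $[M,V_0^1]\pa_xf_{\neq}$ would have to be estimated with $V_0^1$ possessing just one derivative in $L^2_y$; likewise ``adapting the frame to $y+V_0^1$'' is delicate below $H^2$ regularity of the shear profile. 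This is not obviously fatal, but it is a genuine obstruction your outline does not address, and it is precisely what the paper's Duhamel formulation sidesteps (the paper controls $\cN_2$ directly via $\|V_0^1\|_{L^\infty_y}\lesssim\|V_0^1\|_{L^2_y}^{1/2}\|\om_0\|_{L^2_y}^{1/2}$ paired with $\|\pa_x\om_{\neq}\|_{L^1_tL^2}\lesssim\nu^{-1/2}$).

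The substantive gap is that the decisive step is announced rather than carried out. You state that $V^2_{\neq}\pa_y\om_0$ and $V_{\neq}\cdot\na f_{\neq}$ are to be handled ``by paraproduct/bilinear estimates tailored to $H^{log}_xL^2_y$'' and you yourself call this the main obstacle; but for this theorem that step is essentially the entire content. The paper's Lemma \ref{lem: nonlinear-est} performs a Bony decomposition in $x$ only and matches each paraproduct piece with a specific pairing of one damping-type and one dissipation-type bound, e.g.\ $\|\pa_xV^1_{\neq}\|_{L^2_tL^2}\cdot\|\om_{\neq}\|_{L^2_tL^\infty}$, $\|V^1_{\neq}\|_{L^\infty_tL^\infty}\cdot\|\pa_x\om_{\neq}\|_{L^1_tL^2}$, $\|V^2_{\neq}\|_{L^2_tL^\infty}\cdot\|\pa_y\om_{\neq}\|_{L^2_tL^2}$ and $\||D_x|^{1/2}\ln(e+|D_x|)V^2_{\neq}\|_{L^2_tL^2_xL^\infty_y}\cdot\|\pa_y\om_{\neq}\|_{L^2_tL^2}$, each costing exactly $\nu^{-1/2}$ and hence closing at $\beta=1/2$; the single logarithm enters only through the borderline embedding $\|f\|_{L^\infty(\mathbb{T})}\lesssim\||D_x|^{1/2}\ln(e+|D_x|)f\|_{L^2(\mathbb{T})}$ and the Schur-test argument behind the $L^2_tL^\infty_{x,y}$ bound for $\om$. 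Without exhibiting such a matching (or an equivalent one compatible with your multiplier energy), the proposal does not yet prove the theorem: deciding which factor absorbs the $L^\infty$ loss in each paraproduct is exactly what shows that one logarithm of extra $x$-regularity suffices. In short: correct skeleton, genuinely different engine, but the load-bearing bilinear estimates are missing.
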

By the same argument, one may also get:  
\begin{Corollary}
Let $\om$ be a solution of \eqref{eq:NS2} with $\nu<1$. Then for any $\ep>0$, there exists $\epsilon_0>0$, such that if $\|V_{in}\|_{L^2_{x,y}}+\|\om_{in}\|_{H^{\ep}_xL^2_y}\leq \epsilon_0 \nu^{\beta}$ for $\beta\geq 1/2$, then
\beno
\|\om_{\neq}(t)\|_{H^{\ep}_xL^2_y}\leq Ce^{-c\nu^{1/3}t}\|\om_{in}\|_{H^{\ep}_xL^2_y},\quad \|\om_0(t)\|_{L^2_y}\leq C\|\om_{in}\|_{L^2_{x,y}}.
\eeno
where $\om_0(t,y)=\f{1}{|\mathbb{T}|}\int_{\mathbb{T}}\om(t,x,y)dx$ and $\om_{\neq}(t,x,y)=\om(t,x,y)-\om_0(t,y)$. \\
Moreover we have the inviscid damping type estimate,
\begin{align*}
&\int_{0}^{+\infty}\|{V}^2_{\neq}(s)\|_{L^{\infty}_{x,y}}^2ds
+\int_{0}^{+\infty}\||D_x|^{\f12}{V}_{\neq}^2(s)\|_{L^2_xL^{\infty}_y}^2ds
+\int_{0}^{+\infty}\|\pa_x{V}_{\neq}^1(s)\|_{L^2_{x,y}}^2ds
\leq C\|\om_{in}\|_{H^{\ep}_xL^2_y}^2.
\end{align*}
The constants $c,\, C$ are independent of $\nu$.
\end{Corollary}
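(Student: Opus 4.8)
The plan is to rerun the proof of Theorem~\ref{thm: main} essentially verbatim, replacing throughout the horizontal Fourier weight $\ln(e+|D_x|)$ by the polynomial weight $\langle D_x\rangle^{\ep}$, where $\langle k\rangle\eqdef(1+k^2)^{1/2}$ acts on the nonzero horizontal modes $k\neq 0$. I would set up the identical bootstrap/continuity scheme on the same weighted energy functional, with every occurrence of the multiplier $m(k)=\ln(e+|k|)$ replaced by $m_{\ep}(k)=\langle k\rangle^{\ep}$. The crucial structural observation is that the horizontal frequency $k$ is \emph{conserved} by the linear operator $\pa_t+y\pa_x-\nu\Delta$: the shear transports only the $y$-frequency ($\eta\mapsto\eta+kt$), while $m_{\ep}(D_x)$ is a Fourier multiplier in $x$ that commutes with the full linear flow and with the ghost/hypocoercive multipliers producing the $e^{-c\nu^{1/3}t}$ enhanced dissipation and the inviscid-damping gains. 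Consequently the entire linear part of the analysis is blind to whether the horizontal weight is $\ln(e+|D_x|)$ or $\langle D_x\rangle^{\ep}$, and those steps transfer without any change.

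The only place where the weight genuinely enters is the transport nonlinearity $-V\cdot\na\om$, where one must bound $m_{\ep}(k)$ of an output frequency $k=k_1+k_2$ against the weights of the two inputs. There the logarithmic weight was used through its exact subadditivity $\ln(e+|k_1+k_2|)\le \ln(e+|k_1|)+\ln(e+|k_2|)$, which is precisely the structure needed so that, for the critical logarithmic loss produced by the worst frequency interaction, the energy estimate still closes. The polynomial weight satisfies the same splitting, since $\langle k_1+k_2\rangle\le 2\max(\langle k_1\rangle,\langle k_2\rangle)$ gives $m_{\ep}(k_1+k_2)\le 2^{\ep}\big(m_{\ep}(k_1)+m_{\ep}(k_2)\big)$ for every fixed $\ep>0$; and it moreover dominates the log weight, $\ln(e+|k|)\lesssim_{\ep}\langle k\rangle^{\ep}$. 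Thus each nonlinear product estimate from the proof of Theorem~\ref{thm: main} is reproduced with at least as much room to spare — in the high–low regime that dominates the worst interactions one even has $m_{\ep}(k_1+k_2)\lesssim m_{\ep}(k_1)$, so the full weight can be placed on the high-frequency factor. Since $\ep$ is fixed, the $\ep$-dependent constants $C_{\ep}$ generated by these inequalities are harmless and are absorbed into the final $C$.

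With the nonlinear estimates reproduced, I would close the bootstrap exactly as in Theorem~\ref{thm: main} under the hypothesis $\|V_{in}\|_{L^2_{x,y}}+\|\om_{in}\|_{H^{\ep}_xL^2_y}\le\epsilon_0\nu^{\beta}$, $\beta\ge 1/2$, which in particular implies the $H^{log}_xL^2_y$ smallness required to invoke that theorem. This yields the propagated bound $\|\om_{\neq}(t)\|_{H^{\ep}_xL^2_y}\le Ce^{-c\nu^{1/3}t}\|\om_{in}\|_{H^{\ep}_xL^2_y}$ together with the $L^2_y$ control of the zero mode and the inviscid-damping space–time integrals, all with $\nu$-independent constants. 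I expect the only real point requiring care to be a bookkeeping check: that nowhere does the original argument exploit the slow growth of $\ln$ as a genuine quantitative gain (for instance a near-constancy of the ratio $m(k)/m(k')$), but only as a subadditive upper envelope for the horizontal regularity. Once this is verified — and the conservation of $k$ under the linear flow strongly suggests it — the stronger polynomial weight makes the whole scheme strictly more robust, so no new obstacle arises.
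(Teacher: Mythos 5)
Your proposal is correct and coincides with the paper's own treatment: the paper gives no separate proof of this corollary, stating only that it follows ``by the same argument'' as Theorem \ref{thm: main}, i.e.\ by rerunning the bootstrap with the $x$-multiplier $\ln(e+|D_x|)$ replaced by $\langle D_x\rangle^{\ep}$, exactly as you describe. The two points you flag are indeed the only ones to check — that the multiplier commutes with the linear flow (which preserves the $x$-frequency), and that the weight is moderate on the Bony paraproducts while still dominating the log so that the summability facts $\sum_{k\neq 0}|k|^{-1}m(k)^{-2}<\infty$ behind \eqref{eq:V_2L2LinftyLinfty} and \eqref{eq:V_1LinftyL1Linfty} survive — and both hold for $\langle k\rangle^{\ep}$ with $\ep$-dependent constants.
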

By the time weight argument, one can show that there exists $T>0$ independent of $\nu$, such that for $\|\om_{in}\|_{L^2}\leq \ep_0\nu^{\f12}/|\ln\nu|$, $\|\ln(|D|+e)\om(t)\|_{L^2_{x,y}}\leq C\ln((\nu t)^{-1}+e)\|\om_{in}\|_{L^2}$ holds for $t\leq T$, which gives $\|\ln(|D|+e)\om(T)\|_{L^2_{x,y}}\leq C\ln((\nu T)^{-1}+e)\|\om_{in}\|_{L^2}\leq C\ep_0\nu^{\f12}$.  Details can be found in the appendix. The following corollary can be obtained by applying Theorem \ref{thm: main} for $t\geq T$. 

\begin{Corollary}
Let $\om$ be a solution of \eqref{eq:NS2} with $\nu<1$. Then there exists $\epsilon_0>0$, such that if $\|V_{in}\|_{L^2_{x,y}}+\|\om_{in}\|_{L^2_{x,y}}\leq \epsilon_0 \nu^{\f12}|\ln\nu|^{-1}$, then
\beno
\|\om_{\neq}(t)\|_{L^2_{x,y}}\leq Ce^{-c\nu^{1/3}t}\|\om_{in}\|_{L^2_{x,y}},\quad \|\om_0(t)\|_{L^2_y}\leq C\|\om_{in}\|_{L^2_{x,y}}.
\eeno
where $\om_0(t,y)=\f{1}{|\mathbb{T}|}\int_{\mathbb{T}}\om(t,x,y)dx$ and $\om_{\neq}(t,x,y)=\om(t,x,y)-\om_0(t,y)$.\\
Moreover we have the inviscid damping type estimate,
\begin{align*}
&\int_{0}^{+\infty}\|{V}^2_{\neq}(s)\|_{L^{\infty}_{x,y}}^2ds
+\int_{0}^{+\infty}\||D_x|^{\f12}{V}_{\neq}^2(s)\|_{L^2_xL^{\infty}_y}^2ds
+\int_{0}^{+\infty}\|\pa_x{V}_{\neq}^1(s)\|_{L^2_{x,y}}^2ds
\leq C\|\om_{in}\|_{L^2_{x,y}}^2.
\end{align*}
The constants $c,\, C$ are independent of $\nu$.
\end{Corollary}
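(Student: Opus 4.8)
The plan is to upgrade the merely $L^2$ datum to the critical logarithmic regularity by a short-time parabolic regularization, and then to run Theorem~\ref{thm: main} on the time-shifted solution. I would split $[0,\infty)=[0,T]\cup[T,\infty)$ with $T>0$ fixed independent of $\nu$: on $[0,T]$ the dissipation manufactures the logarithmic derivative out of nothing, while on $[T,\infty)$ the already-stated theorem supplies the decay. The two ingredients I need are the smoothing estimate quoted just before the statement,
\beq
\|\ln(e+|D|)\om(t)\|_{L^2_{x,y}}\le C\ln\big((\nu t)^{-1}+e\big)\|\om_{in}\|_{L^2_{x,y}},\qquad 0<t\le T,
\eeq
and the autonomy of \eqref{eq:NS2}, which lets me apply Theorem~\ref{thm: main} to $\tilde\om(s)\eqdef\om(T+s)$ with datum $\om(T)$.

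To establish the smoothing estimate I would run a time-weighted energy estimate on $\Lambda\om$, with $\Lambda\eqdef\ln(e+|D|)$, designed so that the dissipation pays for the growth of the weight: the elementary bound $\sup_\xi\ln(e+|\xi|)e^{-\nu t|\xi|^2}\lesssim\ln((\nu t)^{-1}+e)$ is exactly the logarithmic gain produced by $-\nu\D$, fed into a Duhamel/bootstrap scheme. The transport term $y\pa_x\om$ is antisymmetric in $L^2$ and contributes only a bounded commutator $[\Lambda,y\pa_x]$, hence is harmless. The genuinely delicate term is the nonlinearity $V\cdot\na\om=\dive(V\om)$: since $H^{log}_x$ is not an algebra I would split the product by paraproducts, control $\|V\|_{L^\infty}$ through the borderline 2D estimate (which is where the logarithm is natural), and absorb the high-frequency part into the dissipation $\nu\|\na\Lambda\om\|_{L^2}^2$; crucially the entire nonlinear contribution carries the small factor $\|\om\|_{L^2}\le\|\om_{in}\|_{L^2}\ll\nu^{1/2}|\ln\nu|^{-1}$ from the enstrophy law, which lets the bootstrap on $[0,T]$ close. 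Balancing the singular-in-time weight $\ln((\nu t)^{-1}+e)$ against the parabolic smoothing without letting the logarithmic losses accumulate is the main obstacle.

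Granting the smoothing estimate, at the fixed time $T$ one has $\ln((\nu T)^{-1}+e)\sim|\ln\nu|$, so the amplification is exactly compensated by the hypothesis $\|\om_{in}\|_{L^2}\le\ep_0\nu^{1/2}|\ln\nu|^{-1}$, giving $\|\om(T)\|_{H^{log}_xL^2_y}\le C\ep_0\nu^{1/2}$ and, via Biot--Savart and the energy balance, $\|V(T)\|_{L^2_{x,y}}\le C\ep_0\nu^{1/2}$. After shrinking $\ep_0$ these are the hypotheses of Theorem~\ref{thm: main} (with $\beta=1/2$), so $\tilde\om$ stays $\ep_0\nu^{1/2}$-small in $H^{log}_xL^2_y$ for all $t\ge T$. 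It remains to read off the $L^2$ conclusions with $\nu$-independent constants. Knowing this global smallness, I would re-run the basic $L^2$ energy/hypocoercivity estimate for $\om_{\neq}$, in which the nonlinear terms are now controlled by the smallness, to obtain the loss-free $L^2$-to-$L^2$ decay $\|\om_{\neq}(t)\|_{L^2}\le Ce^{-c\nu^{1/3}(t-T)}\|\om_{\neq}(T)\|_{L^2}$; since $\|\om_{\neq}(T)\|_{L^2}\le\|\om_{in}\|_{L^2}$ and $e^{c\nu^{1/3}T}\le e^{cT}$ this gives the claimed bound (on $[0,T]$ it is trivial because $e^{-c\nu^{1/3}t}\gtrsim1$ while $\|\om_{\neq}(t)\|_{L^2}\le\|\om_{in}\|_{L^2}$). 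The average obeys $\|\om_0(t)\|_{L^2_y}\le\|\om(t)\|_{L^2}\le\|\om_{in}\|_{L^2}$ directly. Finally I would split each inviscid-damping integral as $\int_0^T+\int_T^\infty$: the first piece is over a fixed interval and controlled by the short-time velocity bounds, and the second, being the nonlinear analogue of the loss-free linear estimate \eqref{eq: ED_and_ID}, is bounded by $C\|\om(T)\|_{L^2}^2\le C\|\om_{in}\|_{L^2}^2$; summing yields the stated estimate.
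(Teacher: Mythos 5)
Your proposal follows the paper's route exactly: a short-time parabolic regularization on a $\nu$-independent interval $[0,T]$ giving $\|\ln(e+|D|)\om(T)\|_{L^2_{x,y}}\leq C\ln((\nu T)^{-1}+e)\|\om_{in}\|_{L^2}\leq C\ep_0\nu^{1/2}$, followed by an application of Theorem \ref{thm: main} restarted at time $T$; the paper proves the regularization lemma by Duhamel against the explicit Fourier-side decay of the linear semigroup together with the smallness $\|\om_{in}\|_{L^2}\leq \ep_0\nu^{1/2}|\ln\nu|^{-1}$ to close the nonlinear term, rather than by a weighted energy estimate with paraproducts, but since you also invoke a Duhamel/bootstrap scheme this is a cosmetic difference. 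The one substantive point where you go beyond the paper is your observation that a naive restart of Theorem \ref{thm: main} at time $T$ only yields $\|\om_{\neq}(t)\|_{L^2}\leq C|\ln\nu|\,e^{-c\nu^{1/3}(t-T)}\|\om_{in}\|_{L^2}$, and this logarithmic factor cannot be absorbed into a $\nu$-independent constant (nor removed by interpolating with the trivial enstrophy bound on the layer $t-T\lesssim \nu^{-1/3}\ln|\ln\nu|$); your proposed repair --- re-running the $L^2$ bootstrap for $\om_{\neq}$ with the globally small $H^{log}_xL^2_y$ norm placed on the $L^{\infty}$-type factor in each bilinear term --- is the right way to obtain the literally stated loss-free constants, and the paper's one-sentence proof of this corollary does not address it (the same remark applies to the inviscid-damping integrals, where both the piece on $[0,T]$ and the restarted piece carry an \emph{a priori} $|\ln\nu|^2$ loss that needs the same treatment).
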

It also implies that for $\beta>1/2$, the space $X$ can be taken as $L^2$ which is the largest space. 

Let us now outline the main idea in the proof of Theorem \ref{thm: main}. We will show that there is a time $t\sim\nu^{-\f13}$, such that for any $\tau\geq 0$ the energy $E(\tau)$ of the nonzero mode $\om_{\neq}$ satisfies $E(t+\tau)\leq \f12E(\tau)$ and that there exists $C$ independent of $t$ and $\tau$ such that for any $s\in [\tau,t+\tau]$, $E(s)\leq CE(\tau).$

Let us start by some heuristic argument. The main difficulty is to control the nonlinear growth. There are three nonlinear terms $V_0^1\pa_x\om_{\neq}$, $V_{\neq}^2\pa_y\om_0$ and $V_{\neq}\cdot\na\om_{\neq}$. Formally, for the first term, due to the fact $V_0^1(s)$ behaves as $V_0^{1}(\tau)$ for $|\tau-s|\leq \nu^{-\f13}$ and $\pa_x\om_{\neq}(s)$ behaves as $\nu^{-\f12}(s-\tau)^{-\f32}\om_{\neq}(\tau+1)$ for $s\in [\tau+1,\tau+t]$ (due to the enhanced dissipation), the effect of the nonlinear interactions from time $\tau$ to $\tau+t$ cause $\nu^{-\f12}$ growth. For the second term, one can only obtain that $\|\pa_y\om_0(s)\|_{L^2(\tau,\tau+t)L^2_y}\leq C\nu^{-\f12}\|\om\|_{L^2}$ due to that fact that the initial vorticity is in $L^2_y$. Thus the effect of the nonlinear interactions also cause $\nu^{-\f12}$ growth. One can use the same argument for the third term. However, since the Sobolev embedding of $H^1$ in $L^{\infty}$ fails in dimension 2, we need to assume that the initial vorticity has some  $\log$-type regularity in the $x$ direction (see \eqref{eq:V_2L2LinftyLinfty} and \eqref{eq:V_1LinftyL1Linfty} in Lemma \ref{Lem:lin-invdam}). Finally to cancel the $\nu^{-\f12}$ growth, we assume the initial perturbation is $\nu^{\f12}$ small. 
\begin{remark}
The log-type regularity in the $x$ direction is not optimal. Actually by the same argument, one can replace it by $(\ln(e+|D_x|))^{\gamma}$ or $(\ln(e+|D_x|))^{\f12}(\ln\ln(e+|D_x|))^{\gamma}$ with $\gamma>\f12$ and so on. 
\end{remark}

\section{Linear enhanced dissipation and inviscid damping}
We consider the linearized Navier-Stokes around $(y,0)$
\ben\label{eq:LNS}
\left\{
\begin{array}{l}
\pa_t\omega+y\pa_x\omega-\nu\Delta\om=0,\\
\om|_{t=0}=\om_{in}(x,y),
\end{array}\right.
\een
Taking the Fourier transform in the $x$ direction, we get 
\ben\label{eq:LNSF}
\left\{
\begin{array}{l}
\pa_t\widehat{\om}+i\al y\widehat{\om}-\nu(\pa_y^2-\al^2)\widehat{\om}=0,\\
\widehat{\om}|_{t=0}=\widehat{\om}_{in}(\al,y).
\end{array}\right.
\een
Now let us introduce the key lemmas for the linearized system \eqref{eq:LNSF}. The following lemma shows the enhanced dissipation for the linearized system. 
\begin{lemma}\label{Lem:lin-enha}
Suppose $\om$ is a solution of the linearized Navier Stokes equation \eqref{eq:LNS} with initial data satisfying $\int_{\mathbb{T}}\om_{in}(x,y)dx=0$. Then there exist $c$ and $C$ such that for any $t\geq 0$, 
\begin{align}
&\label{eq:om-Linfty}
\|{\om}(t,x,y)\|_{H^{log}_xL^2_y}\leq Ce^{-c\nu^{\f13}t}\|{\om}_{in}(x,y)\|_{H^{log}_xL^2_y},\ \\
&\label{eq:pa_xyomL2L2L2}
\|\na{\om}(t,x,y)\|_{L^2_{t}(H^{log}_xL^2_y)}\leq C\nu^{-\f12}\|{\om}_{in}(x,y)\|_{H^{log}_xL^2_y},\\
&\label{eq:pa_xomL1L2L2}
\|\pa_x{\om}(t,x,y)\|_{L^1_t({H^{log}_xL^2_y})}
\leq C\nu^{-\f12}\|{\om}_{in}(x,y)\|_{H^{log}_xL^2_y},\\
&\label{eq:omL^2LinftyLinfty}
\|\ln(|D_x|+e){\om}(t,x,y)\|_{L^2_tL^{\infty}_{x,y}}\leq C\nu^{-\f12}\|{\om}_{in}(x,y)\|_{H^{log}_xL^2_y}.
\end{align}
\end{lemma}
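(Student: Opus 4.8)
The plan is to solve the single-mode problem \eqref{eq:LNSF} explicitly and to read off all four bounds from the resulting Gaussian-type decay factor, working one $x$-frequency $\al$ at a time (recall that $\int_{\bbT}\om_{in}\,dx=0$ forces $|\al|\geq 1$). Taking a further Fourier transform in $y$, with dual variable $\eta$, turns \eqref{eq:LNSF} into the transport equation $\pa_t\wh{\wh\om}-\al\pa_\eta\wh{\wh\om}+\nu(\eta^2+\al^2)\wh{\wh\om}=0$, which integrates along the characteristics $\eta\mapsto\eta-\al t$ to give
\beno
\wh{\wh\om}(\al,\eta,t)=\wh{\wh\om}_{in}(\al,\eta+\al t)\,\exp\Big(-\nu\int_0^t\big((\eta+\al r)^2+\al^2\big)\,dr\Big).
\eeno
Everything rests on the elementary identity $\int_0^t(\eta+\al r)^2\,dr=t\big(\eta+\tfrac{\al t}{2}\big)^2+\tfrac{\al^2t^3}{12}\geq\tfrac{\al^2t^3}{12}$, valid for every $\eta$, which quantifies the mixing-driven transfer of energy to high $y$-frequencies.

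I would first dispatch the enhanced dissipation \eqref{eq:om-Linfty}. By Plancherel in $\eta$ the decay factor yields $\|\wh\om(\al,t)\|_{L^2_y}\leq e^{-\nu\al^2t^3/12}\|\wh\om_{in}(\al)\|_{L^2_y}$; since this exponent is a scalar depending only on $\al$, I can pull $\sup_{|\al|\geq 1}e^{-\nu\al^2t^3/12}=e^{-\nu t^3/12}$ out of the $\ln^2(e+|\al|)$-weighted $\ell^2_\al$ sum, so that the $H^{log}_x$ structure is preserved verbatim. Comparing $\nu t^3$ with $\nu^{1/3}t$ in the regimes $t\leq\nu^{-1/3}$ and $t\geq\nu^{-1/3}$ (using the trivial bound $\|\om(t)\|\leq\|\om_{in}\|$ for short times) converts this into the stated rate $e^{-c\nu^{1/3}t}$. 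The gradient bound \eqref{eq:pa_xyomL2L2L2} is then just the energy identity $\tfrac12\tfrac{d}{dt}\|\wh\om(\al)\|_{L^2_y}^2+\nu\|\pa_y\wh\om\|_{L^2_y}^2+\nu\al^2\|\wh\om\|_{L^2_y}^2=0$ (the transport term $i\al y$ being skew-adjoint), integrated in $t$ and summed against $\ln^2(e+|\al|)$.

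For the $L^1_t$ bound \eqref{eq:pa_xomL1L2L2} I would again factor a scalar multiplier out of the $H^{log}_x$ norm: the explicit formula gives the pointwise-in-time estimate $\|\pa_x\om(t)\|_{H^{log}_xL^2_y}\leq M(t)\|\om_{in}\|_{H^{log}_xL^2_y}$ with $M(t)=\sup_{|\al|\geq 1}|\al|\,e^{-\nu\al^2(t^3/12+t)}$. Optimizing the scalar $M$ gives $M(t)\lesssim\nu^{-1/2}\min(t^{-1/2},t^{-3/2})$, where the factor $e^{-\nu\al^2 t}$ controls the range $t\leq 1$ and $e^{-\nu\al^2 t^3}$ controls $t\geq 1$; since $\int_0^1 t^{-1/2}\,dt+\int_1^\infty t^{-3/2}\,dt<\infty$, integrating in time produces exactly the claimed factor $\nu^{-1/2}$.

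The genuinely delicate estimate, and the step I expect to be the main obstacle, is the $L^\infty_{x,y}$ bound \eqref{eq:omL^2LinftyLinfty}. Since $\ln(e+|D_x|)$ commutes with the linear flow, the claim is equivalent to the log-free smoothing estimate $\|\om\|_{L^2_tL^\infty_{x,y}}\lesssim\nu^{-1/2}\|\om_{in}\|_{L^2_{x,y}}$ for mean-zero data. From the explicit formula one gets $\|\wh\om(\al,t)\|_{L^\infty_y}\lesssim(\nu t)^{-1/4}e^{-\nu\al^2(t^3/12+t)}\|\wh\om_{in}(\al)\|_{L^2_y}$ (Agmon in $y$, or $L^\infty_y\le L^1_\eta$ with Cauchy--Schwarz), and the diagonal contribution $\sum_\al\int_0^\infty\|\wh\om(\al,t)\|_{L^\infty_y}^2\,dt$ is already $\lesssim\nu^{-1}\|\om_{in}\|_{L^2}^2$, because $\int_0^\infty(\nu t)^{-1/2}e^{-2\nu\al^2 t}\,dt\simeq\nu^{-1}|\al|^{-1}$ is summable. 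The real difficulty is that estimating $\|\cdot\|_{L^\infty_x}$ by $\sum_\al|\wh\om(\al)|$ is too lossy: this is exactly the logarithmic failure of $H^1\hookrightarrow L^\infty$ in two dimensions (a Brezis--Gallouet/Trudinger--Moser phenomenon), and it is precisely the reason the critical $H^{log}_x$ regularity is imposed. I would therefore avoid any pointwise-in-$t$ bound for $L^\infty_{x,y}$; instead I would combine Agmon in $y$ with the energy-dissipation bound \eqref{eq:pa_xyomL2L2L2} and the time-localization built into the enhanced-dissipation factor (so that at each time essentially only the modes $|\al|\lesssim(\nu t)^{-1/2}$ are active), closing the borderline $x$-summation with the single logarithm supplied by $H^{log}_x$. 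Making this last step quantitative — controlling the off-diagonal interactions inside the space-time $L^2_tL^\infty$ norm rather than term by term — is the crux of the lemma.
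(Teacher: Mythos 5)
Your treatment of \eqref{eq:om-Linfty}, \eqref{eq:pa_xyomL2L2L2} and \eqref{eq:pa_xomL1L2L2} is correct and essentially identical to the paper's: both solve \eqref{eq:LNSF} explicitly in the sheared Fourier variables, complete the square in the exponent to extract $e^{-c\nu\al^2(t^3+t)}$, and reduce each estimate to a scalar multiplier in $\al$ that factors out of the $\ln(e+|\al|)$-weighted $\ell^2_\al$ sum (your unified bound $M(t)\lesssim\nu^{-1/2}\min(t^{-1/2},t^{-3/2})$ for \eqref{eq:pa_xomL1L2L2} is a slightly cleaner packaging of the paper's split of $\int_0^1$ and $\int_1^T$).

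The gap is in \eqref{eq:omL^2LinftyLinfty}, which you explicitly leave unexecuted, and the strategy you sketch for it does not close. You correctly observe that, since $\ln(e+|D_x|)$ commutes with the linear flow, the claim is equivalent to the log-free bound $\|\om\|_{L^2_tL^{\infty}_{x,y}}\lesssim\nu^{-1/2}\|\om_{in}\|_{L^2_{x,y}}$; but your proposed mechanism is to close the $x$-summation ``with the single logarithm supplied by $H^{log}_x$,'' which is self-undermining: a proof that consumes the logarithm does not prove the log-free estimate, and applied to $g=\ln(e+|D_x|)\om$ it would only give $\|\ln(e+|D_x|)\om\|_{L^2_tL^{\infty}}\lesssim\nu^{-1/2}\|\ln^2(e+|D_x|)\om_{in}\|_{L^2}$, one logarithm short of the statement. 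Concretely, the route you describe (Agmon in $y$, then $\|\cdot\|_{L^\infty_x}\le\ell^1_\al$) yields the per-mode bound $\big\|\|\wh\om(\al,\cdot)\|_{L^\infty_y}\big\|_{L^2_t}\lesssim\nu^{-1/2}|\al|^{-1/2}\|\wh\om_{in}(\al)\|_{L^2_y}$, and the remaining sum $\sum_{\al}|\al|^{-1/2}\|\wh\om_{in}(\al)\|_{L^2_y}$ cannot be controlled by $\|\om_{in}\|_{L^2}$ (nor, with the extra weight $\ln(e+|\al|)$ in place, by $\|\om_{in}\|_{H^{log}_xL^2_y}$) because $|\al|^{-1/2}$ is not square-summable — this is exactly the harmonic divergence you name, and the time localization does not remove it. The paper's resolution is a genuinely two-dimensional device that you are missing: set $W(t,x,y)=\om(t,x+yt,y)$ (same $L^{\infty}_{x,y}$ norm, and $\widetilde W(t,\al,\eta)=e^{-\nu(\f13\al^2t^3-\eta\al t^2+\eta^2t+\al^2t)}\widetilde\om_{in}(\al,\eta)$ keeps the frequency support of the data), apply the 2D Bernstein inequality $\|\bigtriangleup_jW\|_{L^{\infty}_{x,y}}\lesssim 2^{j}\|\bigtriangleup_jW\|_{L^2_{x,y}}$ on $\bbT\times\R$, expand the square of $\big\|\sum_j2^j\|\bigtriangleup_jW\|_{L^2}\big\|_{L^2_t}$ into a double sum, use $\int_0^{\infty}\|\bigtriangleup_jW\|_{L^2}\|\bigtriangleup_{j'}W\|_{L^2}\,dt\lesssim\nu^{-1}(2^{2j}+2^{2j'})^{-1}\|\bigtriangleup_j\om_{in}\|_{L^2}\|\bigtriangleup_{j'}\om_{in}\|_{L^2}$, and conclude by Schur's test for the kernel $2^{j}2^{j'}/(2^{2j}+2^{2j'})\sim 2^{-|j-j'|}$. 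The decisive point is that the dyadic decomposition must be isotropic in $(\al,\eta)$ rather than in $\al$ alone: then the off-diagonal decay of the Schur kernel closes the summation with no logarithm at all, and \eqref{eq:omL^2LinftyLinfty} follows by commutation.
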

The next lemma gives the inviscid damping for the linearized system. 
\begin{lemma}\label{Lem:lin-invdam}
Suppose $\om$ is a solution of the linearized Navier Stokes equation \eqref{eq:LNS} with initial data satisfying $\int_{\mathbb{T}}\om_{in}(x,y)dx=0$. Let $\psi$ be the stream function so that $V=(\pa_y\psi,-\pa_x\psi)$ and $-\Delta\psi=\om$, then for any $t\geq 0$,
\begin{align}
&\label{eq:V_2L2LinftyLinfty}
\|\pa_x\psi(t,x,y)\|_{L_t^2L^{\infty}_{x,y}}\leq C\|{\om}_{in}(x,y)\|_{H^{log}_xL^2_y},\\
&\label{eq:V_2L2L2Linfty}
\||D_x|^{1/2}\ln(|D_x|+e)\pa_x{\psi}(t,x,y)\|_{L_t^2L^2_xL^{\infty}_{y}}\leq C\|{\om}_{in}(x,y)\|_{H^{log}_xL^2_y},\\
&\label{eq:V_1L2L2L2}
\|\pa_{y}\pa_x\psi(t,x,y)\|_{L^2_{t}({H^{log}_xL^2_y})}\leq C\|{\om}_{in}(x,y)\|_{H^{log}_xL^2_y}.
\end{align}
Moreover the Sobolev embedding theorem gives
\begin{align}\label{eq:V_1LinftyL1Linfty}
&\|\pa_y{\psi}(t,x,y)\|_{L_t^{\infty}L^{\infty}_{x,y}}\leq C\|{\om}_{in}(x,y)\|_{H^{log}_xL^2_y}.
\end{align}
\end{lemma}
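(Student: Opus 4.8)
The plan is to pass to the Fourier side and reduce everything to the explicit solution of \eqref{eq:LNSF}. Taking the Fourier transform in both $x$ and $y$, write $\widetilde{\om}(t,\al,\eta)$ for the double transform; since $\int_{\bbT}\om_{in}\,dx=0$ only $\al\neq0$ occur, and integrating \eqref{eq:LNSF} along characteristics gives $\widetilde{\om}(t,\al,\eta)=\widetilde{\om}_{in}(\al,\eta+\al t)\exp\big(-\nu\int_0^t(\al^2+(\eta+\al(t-s))^2)\,ds\big)$. The stream function is $\widetilde{\psi}=(\al^2+\eta^2)^{-1}\widetilde{\om}$, so $\pa_x\psi$, $\pa_y\pa_x\psi$ and $\pa_y\psi$ carry the Fourier symbols $\f{i\al}{\al^2+\eta^2}$, $\f{-\al\eta}{\al^2+\eta^2}$ and $\f{i\eta}{\al^2+\eta^2}$ respectively. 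The crucial structural point is that the time-integrability expressing inviscid damping comes from the \emph{oscillation} (the shift $\eta\mapsto\eta+\al t$), not from the viscous factor, so throughout I bound the exponential by $1$.

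The model computation is \eqref{eq:V_1L2L2L2}. By Plancherel, $\int_0^{\infty}\|\pa_y\pa_x\psi(t)\|_{H^{log}_xL^2_y}^2\,dt$ equals, up to a constant, $\sum_{\al}\ln^2(e+|\al|)\int_0^{\infty}\!\int_{\R}\f{\al^2\eta^2}{(\al^2+\eta^2)^2}|\widetilde{\om}(t,\al,\eta)|^2\,d\eta\,dt$. For each fixed $\al$ I change variables $\zeta=\eta+\al t$ (so $dt=d\zeta/|\al|$) and apply Fubini; since $\int_{\R}\f{\al^2\eta^2}{(\al^2+\eta^2)^2}\,d\eta=\f{\pi|\al|}{2}$, the Jacobian $1/|\al|$ cancels the factor $|\al|$ and the inner integral is bounded by $C\|\widehat{\om}_{in}(\al,\cdot)\|_{L^2_y}^2$; summing in $\al$ gives $\|\om_{in}\|_{H^{log}_xL^2_y}^2$. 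Every purely $L^2$-based quantity I need is of this shape: the symbol $\f{\al^m\eta^n}{(\al^2+\eta^2)^k}$ is integrable in $\eta$ and the $1/|\al|$ Jacobian supplies exactly the right power of $|\al|$.

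For the two $L^{\infty}$-in-space bounds I add Sobolev embeddings. For \eqref{eq:V_1LinftyL1Linfty} I use $\|f\|_{L^{\infty}_{x,y}}\leq\sum_{\al}\int_{\R}|\widetilde{f}(\al,\eta)|\,d\eta$, bound each inner integral by Cauchy--Schwarz in $\eta$ (the symbol $\f{|\eta|}{\al^2+\eta^2}$ gains a factor $|\al|^{-1/2}$ from $\int_{\R}\f{\eta^2}{(\al^2+\eta^2)^2}d\eta=\f{\pi}{2|\al|}$), and then sum in $\al$ by the \emph{logarithmically weighted} Cauchy--Schwarz using $\sum_{\al\neq0}\f{1}{|\al|\ln^2(e+|\al|)}<\infty$; together with the energy bound \eqref{eq:om-Linfty} (again with the exponential discarded) and $\sup_t$ this yields \eqref{eq:V_1LinftyL1Linfty} without using the oscillation at all. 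The same convergent sum gives the borderline one-dimensional Sobolev inequality $\|g\|_{L^{\infty}_x}\leq C\||D_x|^{1/2}\ln(e+|D_x|)g\|_{L^2_x}$ on $\bbT$, which is precisely where the $H^{log}_x$ hypothesis is consumed; applied for each $(t,y)$ to $g=\pa_x\psi(t,\cdot,y)$ and combined with Minkowski's inequality $\|\cdot\|_{L^{\infty}_yL^2_x}\leq\|\cdot\|_{L^2_xL^{\infty}_y}$, it deduces \eqref{eq:V_2L2LinftyLinfty} directly from \eqref{eq:V_2L2L2Linfty}.

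This leaves \eqref{eq:V_2L2L2Linfty}, which I expect to be the main obstacle. The natural tool is the one-dimensional Agmon inequality $\|h\|_{L^{\infty}_y}^2\leq2\|h\|_{L^2_y}\|\pa_yh\|_{L^2_y}$ applied to each $x$-frequency $w_{\al}(t,y)=\widehat{\pa_x\psi}(t,\al,y)$; combined with the change-of-variables computation above (now for the symbols $\f{\al^2}{(\al^2+\eta^2)^2}$ and $\f{\al^2\eta^2}{(\al^2+\eta^2)^2}$) and Cauchy--Schwarz in $t$ performed \emph{at fixed $\al$}, it gives $\int_0^{\infty}\|w_{\al}(t)\|_{L^{\infty}_y}^2\,dt\lesssim|\al|^{-1}\|\widehat{\om}_{in}(\al,\cdot)\|_{L^2_y}^2$, so the weight $|\al|\ln^2(e+|\al|)$ carried by $W\eqdef|D_x|^{1/2}\ln(e+|D_x|)\pa_x\psi$ reproduces $\|\om_{in}\|_{H^{log}_xL^2_y}^2$ with the sharp logarithm. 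The delicacy is that $\|W\|_{L^2_xL^{\infty}_y}^2$ is \emph{not} controlled by the sum over $\al$ of these per-frequency $L^{\infty}_y$ norms (distinct $x$-frequencies can peak at the same $y$), so one may not simply square-sum; and conversely the global route, Agmon in the form $\|W\|_{L^2_xL^{\infty}_y}^2\lesssim\|W\|_{L^2_{x,y}}\|\pa_yW\|_{L^2_{x,y}}$ followed by Cauchy--Schwarz in $t$ \emph{after} summing in $\al$, separates two frequency sums whose geometric mean overshoots $\|\om_{in}\|_{H^{log}_xL^2_y}^2$ by a power of the logarithm. The resolution I would pursue is to keep the frequency localization coupled through the time integration—carrying out the $\zeta$-change of variables on the full space--time integral and invoking Agmon only at fixed $\al$—so that the cross-frequency contributions, which are governed by the time-oscillation (orthogonality), cost at most a constant rather than a logarithmic factor.
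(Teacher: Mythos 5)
Your treatments of \eqref{eq:V_1L2L2L2}, \eqref{eq:V_1LinftyL1Linfty}, and the deduction of \eqref{eq:V_2L2LinftyLinfty} from \eqref{eq:V_2L2L2Linfty} are correct and essentially identical to the paper's: Plancherel plus the change of variables $\zeta=\eta+\al t$ with Jacobian $1/|\al|$ for the first; an $\ell^1_\al L^1_\eta$ bound with Cauchy--Schwarz against $\sum_{\al\neq0}|\al|^{-1}\ln^{-2}(e+|\al|)<\infty$ for the second (the paper phrases this via Gagliardo--Nirenberg in $y$, which gives the same $|\al|^{-1/2}$ gain); and the borderline embedding $\|g\|_{L^\infty_x}\leq C\||D_x|^{1/2}\ln(e+|D_x|)g\|_{L^2_x}$ plus Minkowski for the third. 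Your solution formula and the observation that the viscous factor may be discarded are also right.

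The gap is exactly where you place it: \eqref{eq:V_2L2L2Linfty} is not proved. Your per-frequency Agmon computation does yield $\int_0^\infty\|w_\al(t)\|_{L^\infty_y}^2dt\lesssim|\al|^{-1}\|\widehat{\om}_{in}(\al,\cdot)\|_{L^2_y}^2$, but, as you acknowledge, $\|W\|_{L^2_xL^\infty_y}^2$ is not controlled by $\sum_\al\|\widehat W(\al,\cdot)\|_{L^2_tL^\infty_y}^2$ (the sup in $y$ cannot be pulled inside the frequency sum), and the proposed repair --- exploiting cross-frequency orthogonality of the time oscillations --- is left as a program, not an argument. The paper avoids this entirely by never using $L^\infty_y$ at the per-frequency level: it bounds $\|h\|_{L^\infty_y}\leq C\|\widetilde h\|_{L^1_\eta}$, so that
$\|f\|_{L^2_tL^2_xL^\infty_y}\leq C\|\widetilde f\|_{\ell^2_\al L^2_tL^1_\eta}\leq C\|\widetilde f\|_{\ell^2_\al L^1_\eta L^2_t}$,
the last step being Minkowski's inequality \eqref{eq:M} in $(\eta,t)$. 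At fixed $(\al,\eta)$ the $L^2_t$ norm is computed by your own change of variables, giving $\f{|\al|^{3/2}\ln(|\al|+e)}{\al^2+\eta^2}\,|\al|^{-1/2}\|\widetilde{\om}_{in}(\al,\cdot)\|_{L^2_\eta}$, and then $\int_\R\f{|\al|}{\al^2+\eta^2}\,d\eta=\pi$ uniformly in $\al$ closes the $L^1_\eta$ integral with no logarithmic loss. So the missing idea is the substitution of $L^1_\eta$ for $L^\infty_y$ combined with the Minkowski swap that puts $L^2_t$ innermost; once that is in place, no Agmon inequality and no orthogonality argument are needed. You should also record that the embedding used for \eqref{eq:V_2L2LinftyLinfty} requires $\pa_x\psi$ to have zero $x$-mean, which holds here because only $\al\neq0$ occur.
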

We begin the proof of Lemma \ref{Lem:lin-enha}. 
\begin{proof}
Let $\widetilde{\om}(t,\al,\eta)=\int_{\R}\widehat{\om}(t,\al,y)e^{-i\eta y}dy$ be the Fourier transform of $\widehat{\om}$ in $y$.  
Let $W(t,x,y)=\om(t,x+yt,y)$, then $\widehat{W}(t,\al,y)=\widehat{\om}(t,\al,y)e^{i\al yt}$ and $\widetilde{W}(t,\al,\eta)=\int_{\R}\widehat{\om}(t,\al,y)e^{i\al yt}e^{-i\eta y}dy=\widetilde{\om}(t,\al,\eta-\al t)$. 
It is easy to check that
\beno
\pa_t\widetilde{W}+\nu(\al^2+(\eta-\al t)^2)\widetilde{W}=0,
\eeno
thus we obtain that
\ben\label{eq: id-W}
\widetilde{W}(t,\al,\eta)=e^{-\nu\big(\f13\al^2t^3-\eta\al t^2+\eta^2t+\al^2t\big)}\widetilde{\om}_{in}(\al,\eta),
\een
which gives 
\begin{align*}
|\widetilde{\om}(t,\al,\eta)|
&=e^{-\nu\big(\f13\al^2t^3+\eta\al t^2+\eta^2t+\al^2t\big)}|\widetilde{\om}_{in}(\al,\eta+\al t)|\\
&=e^{-\nu\big(\f{1}{21}\al^2t^3+t(\f{\sqrt{7}}{2\sqrt{2}}\eta+\f{\sqrt{2}}{\sqrt{7}}\al t)^2+\f18\eta^2t+\al^2t\big)}|\widetilde{\om}_{in}(\al,\eta+\al t)|\\
&\leq e^{-\f{1}{21}\al^2\nu t^3-\al^2\nu t-\f18\eta^2\nu t}|\widetilde{\om}_{in}(\al,\eta+\al t)|.
\end{align*}
Thus by using Plancherel's theorem, we get that
\beno
\|\widehat{\om}(t,\al,y)\|_{L^2_y}\leq Ce^{-c\nu\big(\al^2t^3+\al^2t\big)}\|\widehat{\om}_{in}(\al,y)\|_{L^2_y},
\eeno
and
\beno
\|(\pa_y,\al)\widehat{\om}(t,\al,y)\|_{L_t^2L^2_y}\leq C\nu^{-\f12}\|\widehat{\om}_{in}(\al,y)\|_{L^2_y},
\eeno
which gives $\|\ln(|D_x|+e)\na{\om}(t,x,y)\|_{L^2_{t,x,y}}\leq C\nu^{-\f12}\|\ln(|D_x|+e){\om}_{in}(x,y)\|_{L^2_{x,y}}$.

Next we prove \eqref{eq:pa_xomL1L2L2}. We get  
\begin{align*}
&\|\ln(|D_x|+e)\pa_x{\om}(t,x,y)\|_{L^1_tL^2_{x,y}}\\
&\leq C\int_0^T\left(\sum_{\al\neq 0}\|\al\ln(|\al|+e)\widehat{\om}(t,\al,y)\|_{L^2_y}^2\right)^{\f12}dt\\
&\leq C\int_0^1\left(\sum_{\al\neq 0}\||\al|\ln(|\al|+e)\widehat{\om}(t,\al,y)\|_{L^2_y}^2\right)^{\f12}dt\\
&\quad+C\int_1^T\left(\sum_{\al\neq 0}\||\al|\ln(|\al|+e)\widehat{\om}(t,\al,y)\|_{L^2_y}^2\right)^{\f12}dt\\
&\leq  C\left(\int_0^1\sum_{\al\neq 0}\||\al|\ln(|\al|+e)\widehat{\om}(t,\al,y)\|_{L^2_y}^2dt\right)^{\f12}\\
&\quad+\int_1^T\f{C}{t^{3/2}\nu^{1/2}}\left(\sum_{\al\neq 0}\|\ln(|\al|+e)\widehat{\om}_{in}(\al,y)\|_{L^2_y}^2\right)^{\f12}dt\\
&\leq C\nu^{-\f12}\|\ln(|\al|+e)\widehat{\om}_{in}(\al,y)\|_{l^2_{\al}L^2_y}^2.
\end{align*}

At last we prove \eqref{eq:omL^2LinftyLinfty}. Here we will use the Littlewood-Paley theory on $\mathbb{T}\times \R$ which can be found in Section 4.1.1. Let us recall the notation that 
\beno
\bigtriangleup_ju=\int_{\R}\sum_{\al}\widetilde{u}(\al,\eta)\widetilde{\Phi}_j(\al,\eta)e^{i\al x+i\eta y}d\eta={\Phi}_j\ast u. 
\eeno
Recall $W(t,x,y)=\om(t,x+yt,y)$. Then by \eqref{eq:be2D} and \eqref{eq: Schur}, we get that
\begin{align*}
&\|\om(t,x,y)\|_{L^2_tL^{\infty}_{x,y}}\leq \|\om(t,x+yt,y)\|_{L^2_tL^{\infty}_{x,y}}\\
&\leq\Big\|\sum_{j\geq 0}\|\bigtriangleup_jW(t,x,y)\|_{L^{\infty}_{x,y}}\Big\|_{L^2_t}
\leq C\Big\|\sum_{j\geq 0}2^{j}\|\bigtriangleup_jW(t,x,y)\|_{L^{2}_{x,y}}\Big\|_{L^2_t}\\
&\leq C\Big\| \sum_{j\geq 0}2^{j}\|\widetilde{W}(t,\al,\eta)\widetilde{\Phi}_j(\al,\eta)\|_{l^{2}_{\al}L^{2}_{y}}\Big\|_{L^2_t}\\
&=C\int_0^{\infty}\sum_{j'\geq 0} \sum_{j\geq 0}2^{j'}2^{j}\|\widetilde{W}(t,\al,\eta)\widetilde{\Phi}_j(\al,\eta)\|_{l^{2}_{\al}L^{2}_{y}}(s)\|\widetilde{W}(t,\al,\eta)\widetilde{\Phi}_{j'}(\al,\eta)\|_{l^{2}_{\al}L^{2}_{y}}(s)ds\\
&\leq C\left(\nu^{-1}\sum_{j\geq 0}\sum_{j'\geq 0}\f{2^{j}2^{j'}}{2^{2j}+2^{2j'}}\|\bigtriangleup_j\widetilde{\om}_{in}\|_{L^{2}_{x,y}}\|\bigtriangleup_{j'}\widetilde{\om}_{in}\|_{L^{2}_{x,y}}\right)^{\f12}\\
&\leq C\nu^{-1/2}\|{\om}_{in}\|_{L^{2}_{x,y}}.
\end{align*}
The last inequality follows from the fact that the kernel $K(j,j')=\f{2^{j}2^{j'}}{2^{2j}+2^{2j'}}$ satisfies the Schur criterion, 
\beno
\sup_{j'\geq 0}\sum_{j\geq 0}\f{2^{j}2^{j'}}{2^{2j}+2^{2j'}}+\sup_{j\geq 0}\sum_{j'\geq 0}\f{2^{j}2^{j'}}{2^{2j}+2^{2j'}}\leq C. 
\eeno
By the same argument, we get 
\beno
\|\ln(e+|D_x|)\om(t,x,y)\|_{L^2_tL^{\infty}_{x,y}}\leq C\nu^{-1/2}\|\ln(e+|D_x|){\om}_{in}\|_{L^{2}_{x,y}}.
\eeno
Thus we proved the lemma. 
\end{proof}

Next we begin the proof of Lemma \ref{Lem:lin-invdam}.
\begin{proof}
Let us first prove \eqref{eq:V_2L2L2Linfty}. 
By the fact that $\widetilde{\psi}(t,\al,\eta)=(\al^2+\eta^2)\widetilde{\om}(t,\al,\eta)$ and by using \eqref{eq: id-W}, we have
\begin{align*}
|\al\widetilde{\psi}(t,\al,\eta)|\leq C\f{|\al|}{|\eta|^2+\al^2}|\widetilde{\om}_{in}(\al,\eta+\al t)|.
\end{align*}
Thus we get by the Minkowski's integral inequality \eqref{eq:M} that
\begin{align*}
&\||D_x|^{\f12}\ln(|D_x|+e)\pa_x\psi(t,x,y)\|_{L^2_tL^2_xL^{\infty}_y}\\
&\leq \|\al^{\f32}\ln(|\al|+e)\widetilde{\psi}(t,\al,\eta)\|_{l^2_{\al}L_t^2L^{1}_{\eta}}
\leq \|\al^{\f32}\ln(|\al|+e)\widetilde{\psi}(t,\al,\eta)\|_{l^2_{\al}L^{1}_{\eta}L_t^2}\\
&\leq C\left(\sum_{\al\neq 0}\Big(\int_{\R}\f{|\al|^{\f32}\ln(|\al|+e)}{|\eta|^2+\al^2}\Big(\int_0^{T}|\widetilde{\om}_{in}(\al,\eta+\al t)|^2dt\Big)^{\f12}d\eta\Big)^2\right)^{\f12}\\
&\leq C\left(\sum_{\al\neq 0}\Big(\int_{\R}\f{|\al|\ln(|\al|+e)}{|\eta|^2+\al^2}\|\widetilde{\om}_{in}(\al,\eta)\|_{L^2_{\eta}}d\eta\Big)^2\right)^{\f12}\\
&\leq C\left(\sum_{\al\neq 0}\|\ln(|\al|+e)\widetilde{\om}_{in}(\al,\eta)\|_{L^2_{\eta}}^2\right)^{\f12}\leq C\|\ln(|D_x|+e){\om}_{in}(x,y)\|_{L^2_{x,y}},
\end{align*}
which implies \eqref{eq:V_2L2L2Linfty}.

The estimate \eqref{eq:V_2L2LinftyLinfty} follows from the \eqref{eq:V_2L2L2Linfty} and the following Sobolev embedding result,
\beno
\left\|f-\f{1}{|2\pi|}\int_{\mathbb{T}}f(x)dx\right\|_{L^{\infty}(\mathbb{T})}\leq C\||D_x|^{\f12}\ln(|D_x|+e)f\|_{L^2(\mathbb{T})}.
\eeno

Next we prove \eqref{eq:V_1L2L2L2}. We have, 
\begin{align*}
&\||\al|\ln(|\al|+e)\pa_y\widehat{\psi}(t,\al,y)\|_{L^2_{t,\al,y}}\leq \||\al|\ln(|\al|+e)\pa_y\widetilde{\psi}(t,\al,\eta)\|_{L^2_{t,\al,\eta}}\\
&\leq C\left(\sum_{\al\neq 0}\int_{\R}\int_0^T\Big(\f{|\al|\ln(|\al|+e)|\eta|}{\al^2+\eta^2}|\widetilde{\om}_{in}(\al,\eta+\al t)|\Big)^2dtd\eta\right)^{\f12}\\
&\leq C\left(\sum_{\al\neq 0}\int_{\R}\Big(\f{|\al||\eta|}{\al^2+\eta^2}\Big)^2|\al|^{-1}d\eta\|\ln(|\al|+e)\widetilde{\om}_{in}(\al,\eta)\|_{L^2_{\eta}}^2\right)^{\f12}\\
&\leq C\|\ln(|\al|+e)\widetilde{\om}_{in}(\al,\eta)\|_{l^2_{\al}L^2_{\eta}}.
\end{align*}

Finally by the Gagliardo-Nirenberg inequality \eqref{eq: GN}, we have
\begin{align*}
&\|\pa_y\widehat{\psi}(t,\al,y)\|_{L_t^{\infty}l^1_{\al}L^{\infty}_{y}}\\
&\leq C\big\|\|\pa_y\widehat{\psi}(t,\al,y)\|_{L^{2}_{y}}^{\f12}\|\pa_y\widehat{\psi}(t,\al,y)\|_{H^{1}_{y}}^{\f12}\big\|_{L_t^{\infty}l^1_{\al}}\\
&\leq C\big\||\al|^{-\f12}(\ln(|\al|+e))^{-1}\||\al|\ln(|\al|+e)\pa_y\widehat{\psi}(t,\al,y)\|_{L^{2}_{y}}^{\f12}\|\ln(|\al|+e)\pa_y\widehat{\psi}(t,\al,y)\|_{H^{1}_{y}}^{\f12}\big\|_{L_t^{\infty}l^1_{\al}}\\
&\leq C\big\|\||\al|^{-\f12}(\ln(|\al|+e))^{-1}\|_{l^2_{\al}}\|\|\ln(|\al|+e)\pa_y\widehat{\psi}(t,\al,y)\|_{L^{2}_{y}}^{\f12}\|_{l^{4}_{\al}}\|\|\ln(|\al|+e)\pa_y\widehat{\psi}(t,\al,y)\|_{H^{1}_{y}}^{\f12}\|_{l^4_{\al}}\big\|_{L_t^{\infty}}\\
&\leq C\|\ln(|\al|+e)\widehat{\om}_{in}(\al,y)\|_{L^2_{\al,y}},
\end{align*}
which gives the last inequality. 
Thus we proved the lemma. 
\end{proof}

%Then Lemma \ref{Lem:lin-enha} implies that,  
%\begin{align}
%&\|\ln(|D_x|+e)S(t,s)f\|_{L^2_{x,y}}\leq Ce^{-c_0\nu^{\f13}(t-s)}\|\ln(|D_x|+e)f\|_{L^2_{x,y}},\label{eq:om-Linfty}\\
%&\int_s^{\infty}\|\ln(|D_x|+e)\na S(t,s)f\|_{L^2_{x,y}}^2dt\leq C\nu^{-1}\|\ln(|D_x|+e)f\|_{L^2_{x,y}}^2, \label{eq:pa_xyomL2L2L2}\\
%&\int_s^{\infty}\|\ln(|D_x|+e)\pa_xS(t,s)f\|_{L^2_{x,y}}dt\leq C\nu^{-1/2}\|\ln(|D_x|+e)f\|_{L^2_{x,y}},\label{eq:pa_xomL1L2L2}\\
%&\left(\int_s^{T}\|\ln(|D_x|+e)S(t,s)f\|_{L^{\infty}_{x,y}}^2dt\right)^{\f12}\leq C\nu^{-1/2}\|\ln(|D_x|+e)f\|_{L^2_{x,y}},\label{eq:omL^2LinftyLinfty}\\
%&\int_s^{\infty}\|\pa_x(-\Delta)^{-1}S(t,s)f\|_{L^{\infty}_{x,y}}^2dt\leq C\|\ln(|D_x|+e)f\|_{L^2_{x,y}}^2,\label{eq:V_2L2LinftyLinfty}\\
%&\int_s^{\infty}\||D_x|^{\f12}\ln(|D_x|+e)\pa_x(-\Delta)^{-1}S(t,s)f(\al,y)\|_{L^2_xL^{\infty}_y}^2dt\leq C\|\ln(|D_x|+e)f\|_{L^2_{x,y}}^2,\label{eq:V_2L2L2Linfty}\\
%&\int_s^{\infty}\|\ln(|D_x|+e)\pa_y\pa_x(-\Delta)^{-1}S(t,s)f(\al,y)\|_{L^2_{x,y}}^2dt\leq C\|\ln(|D_x|+e)f\|_{L^2_{x,y}}^2,\label{eq:V_1L2L2L2}\\
%&\sup_{t\in[s,\infty)}\|\pa_y(-\Delta)^{-1}S(t,s)f\|_{L^{\infty}_{x,y}}\leq C\|\ln(|D_x|+e)f\|_{L^2_{x,y}},\label{eq:V_1LinftyL1Linfty}
%\end{align}
%with constant $C$ independent of $\nu$. 

\section{Nonlinear enhanced dissipation and inviscid damping}
In this section, we prove the nonlinear enhanced dissipation and inviscid damping. 

For $t> s$, let $S(t,s)f$ solve 
\beno
\left\{
\begin{array}{l}
\pa_t\om+y\pa_x{\om}-\nu\Delta{\om}=0,\\
\om|_{t=s}=f(x,y),
\end{array}\right.
\eeno
with $\int_{\mathbb{T}}f(x,y)dx=0$. 

We now consider the nonlinear equation,
\ben\label{eq:NSomneq0}
\left\{
\begin{array}{l}
\pa_t{\om}_{\neq}+y\pa_x{\om}_{\neq}-\nu\Delta{\om}_{\neq}
=-\mathcal{N}_1-\mathcal{N}_2-\mathcal{N}_3,\\
\om_{\neq}|_{t=0}=P_{\neq0}\om_{in}(x,y),
\end{array}\right.
\een
with 
\beno
\mathcal{N}_1=({V}^1_{\neq}\pa_x{\om}_{\neq})_{\neq}(t,x,y)+({V}^2_{\neq}\pa_y{\om}_{\neq})_{\neq}(t,x,y),
\eeno
$\mathcal{N}_2={V}_0^1(t,y)\pa_x{\om}_{\neq}(t,x,y)$ and 
$\mathcal{N}_3={V}_{\neq}^2(t,x,y)\pa_y{\om}_0(t,y)$,
where ${\om}_0(t,y)$ satisfies
\ben\label{eq:NSom0}
\left\{
\begin{array}{l}
\pa_t{\om}_{0}-\nu\pa_y^2{\om}_{0}=-({V}^1_{\neq}\pa_x{\om}_{\neq})_{0}(t,y)-({V}^2_{\neq}\pa_y{\om}_{\neq})_0(t,y),\\
\om_{0}|_{t=0}=P_{0}\om_{in}(y),
\end{array}\right.
\een
and ${V}_0^1(t,y)$ satisfies
\ben\label{eq:V_0}
\left\{
\begin{array}{l}
\pa_t{V}_0^1-\nu\pa_y^2 {V}_0^1=-({V}^1_{\neq}\pa_x{V}^1_{\neq})_0(t,y)-({V}^2_{\neq}\pa_y{V}^1_{\neq})_0(t,y),\\
V_0^1|_{t=0}=P_0V_{in}^1(y).
\end{array}\right.
\een
We get by the enstrophy conservation law that 
\beq\label{eq:basic energy}
\|{\om}(t)\|_{L^{2}_{x,y}}^2+2\nu\int_0^t\|\na{\om}(s)\|_{L^2_{x,y}}^2ds=\|{\om}_{in}\|_{L^2_{x,y}}^2,
\eeq
which implies
\ben\label{eq:NAom}
\int_0^t\|\pa_y{\om}_{\neq}(s)\|_{L^2_{x,y}}^2ds+\int_0^t\|\pa_y{\om}_0(s)\|_{L^2_{y}}^2ds\leq \f{1}{2\nu}\|\om(0)\|_{L^2_{x,y}}^2.
\een
We also have
\beno
\|e^{t\nu\pa_y^2}f\|_{L^2}\leq \|f\|_{L^2}, \quad 
\int_s^{\infty}\|\pa_ye^{(t-s)\nu\pa_y^2}f\|_{L^2}^2dt\leq \f{1}{\nu}\|f\|_{L^2}^2,
\eeno
and
\begin{align*}
{V}_0^1(t,y)=&{V}_{in}^1(y)
-\int_0^te^{(t-s)\nu\pa_y^2}\Big(({V}^1_{\neq}\pa_x{V}^1_{\neq})_0(s,y)+({V}^2_{\neq}\pa_y{V}^1_{\neq})_0(s,y)\Big)ds,
\end{align*}
and
\begin{align*}
&\om_{\neq}(t+\tau,\al,y)\\
&=S(t,0)\om_{\neq}(\tau,\al,y)-\int_{0}^tS(t,s)\big(\mathcal{N}_1+\mathcal{N}_2+\mathcal{N}_3\big)(s+\tau)ds.
\end{align*}

The proof of Theorem \ref{thm: main} is based on a bootstrap argument. 

Suppose $\|\ln(e+|D_x|)\om_{in}\|_{L^2_{x,y}}+\|V_{in}\|_{L^2_{x,y}}\leq \epsilon_0\nu^{\b}$ and for any $\tau, t+\tau\in [0,T]$ with $t\geq 0$, the following inequalities hold: 
\begin{itemize}
\item[1. ]Uniform bound of $V_0^1$
\beq
\|{V}^1_0(\tau)\|_{L^2_{y}}\leq 8C_0\epsilon_0\nu^{\b}; \label{btsp:V^1_0}
\eeq
\item[2. ]Enhanced dissipation 
\begin{align}
\|\ln(e+|D_x|){\om}_{\neq}(t+\tau)\|_{L^2_{x,y}}&\leq 8{C}_1e^{-c_1\nu^{\f13}t}\|\ln(e+|D_x|){\om}_{\neq}(\tau)\|_{L^2_{x,y}}\label{btsp:om-point}\\
\left(\int_{\tau}^T\|\ln(e+|D_x|)\na{\om}_{\neq}(s)\|_{L^2_{x,y}}^2ds\right)^{\f12}&\leq 8{C}_2\nu^{-\f12}\|\ln(e+|D_x|){\om}_{\neq}(\tau)\|_{L^2_{x,y}}\label{btsp:pa_xyomL2L2L2}\\
\int_{\tau}^T\|\ln(e+|D_x|)\pa_x{\om}_{\neq}(s)\|_{L^2_{x,y}}ds&\leq 8{C}_3\nu^{-\f12}\|\ln(e+|D_x|){\om}_{\neq}(\tau)\|_{L^2_{x,y}}\label{btsp:pa_xomL1L2L2}\\
\left(\int_{\tau}^T\|\ln(e+|D_x|){\om}_{\neq}(s)\|_{L^{\infty}_{x,y}}^2ds\right)^{\f12}&\leq 8{C}_4\nu^{-\f12}\|\ln(e+|D_x|){\om}_{\neq}(\tau)\|_{L^2_{x,y}}\label{btsp:omL2L1Linfty}
\end{align}
\item[3. ]Inviscid damping
\begin{align}
\left(\int_{\tau}^{T}\|V^2_{\neq}(s)\|_{L^{\infty}_{x,y}}^2dt\right)^{\f12}
&\leq 8{C}_5\|\ln(e+|D_x|){\om}_{\neq}(\tau)\|_{L^2_{x,y}},\label{btsp:V_2L2LinftyLinfty}\\
\left(\int_{\tau}^{T}\||D_x|^{\f12}\ln(e+|D_x|)V_{\neq}^2(s)\|_{L_x^2L^{\infty}_{y}}^2dt\right)^{\f12}
&\leq 8{C}_6\|\ln(e+|D_x|){\om}_{\neq}(\tau)\|_{L^2_{x,y}},\label{btsp:V_2L2L2Linfty}\\
\left(\int_{\tau}^{T}\|\ln(e+|D_x|)\pa_x{V}_{\neq}^1(s)\|_{L^2_{x,y}}^2ds\right)^{\f12}
&\leq 8{C}_7\|\ln(e+|D_x|){\om}_{\neq}(\tau)\|_{L^2_{x,y}};\label{btsp:V_1L2L2L2}
\end{align}
\item[4. ]Uniform bound of $V_{\neq}^1$
\beq
\sup_{s\in[\tau,T)}\|V_{\neq}^1(s)\|_{L^{\infty}_{x,y}}
\leq 8{C}_8\|\ln(e+|D_x|){\om}_{\neq}(\tau)\|_{L^2_{x,y}}.\label{btsp:V_1LinftyL1Linfty}
\eeq
\end{itemize}

The constants $c_1,\ep_0$, and ${C}_k\geq 1$, $k=0,1,2,...,8$, will be determined later. 

By choosing $t=\tau$ and $\tau=0$ in \eqref{btsp:om-point}, we get
\ben\label{lem:tau-0}
\|{\om}_{\neq}(\tau)\|_{L^2_{x,y}}\leq \|\ln(e+|D_x|){\om}_{\neq}(\tau)\|_{L^2_{x,y}}\leq 8C_1\ep_0\nu^{\b}.
\een

\begin{proposition}\label{Prop:btsp}
Let $\beta\geq 1/2$. Assume that $\|\om_{in}\|_{H^{log}_{x}L^2_{y}}+\|V_{in}\|_{L^2_{x,y}}\leq \epsilon_0\nu^{\b}$ and that for some $T>0$, the estimate \eqref{btsp:V^1_0}-\eqref{btsp:V_1LinftyL1Linfty} hold on $[0,T]$. Then there exists $\nu_0$ so that for $\nu<\nu_0$ and $\ep_0$ sufficiently small depending only on $c_1$ and $C_k(k=0,...,8)$ (in particular, independent of $T$), these same estimates hold with all the occurrences of $8$ on the right-hand side replaced by $4$. 
\end{proposition}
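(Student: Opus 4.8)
The plan is to run the bootstrap entirely through the Duhamel formula
\[
\om_{\neq}(t+\tau)=S(t,0)\om_{\neq}(\tau)-\int_0^t S(t,s)\big(\mathcal N_1+\mathcal N_2+\mathcal N_3\big)(s+\tau)\,ds,
\]
estimating the homogeneous term $S(t,0)\om_{\neq}(\tau)$ by the \emph{linear} Lemmas \ref{Lem:lin-enha} and \ref{Lem:lin-invdam} and treating each $\mathcal N_i$ as a source. For every one of the estimates \eqref{btsp:om-point}--\eqref{btsp:V_1LinftyL1Linfty} the homogeneous contribution reproduces exactly the base constant $C_k$ multiplying $\|\ln(e+|D_x|)\om_{\neq}(\tau)\|_{L^2_{x,y}}$ (for the $L^2_t$-type norms this uses the source-free versions of \eqref{eq:pa_xyomL2L2L2} and \eqref{eq:V_2L2L2Linfty}). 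The task then reduces to showing that the Duhamel integral of each nonlinear term is bounded by $C\ep_0\nu^{\beta-1/2}$ times the same right-hand side. Because $\beta\ge 1/2$ gives $\nu^{\beta-1/2}\le 1$, choosing $\ep_0$ small depending only on $c_1$ and the finitely many $C_k$ forces every nonlinear contribution below $3C_k$ times the right-hand side, which upgrades the prefactor $8$ to $4$; the factor $8$ in the hypotheses is precisely the slack needed to express the sources back in terms of $\|\ln(e+|D_x|)\om_{\neq}(\tau)\|_{L^2_{x,y}}$.

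The core is the source estimates, where the gain of $\nu^{\beta}$ (smallness) against $\nu^{-1/2}$ (dissipation) is extracted. For $\mathcal N_2=V_0^1\pa_x\om_{\neq}$, since $V_0^1$ is $x$-independent the log weight falls onto $\pa_x\om_{\neq}$, and I would bound $\mathcal N_2$ in $L^1_tL^2_{x,y}$ by $\|V_0^1\|_{L^\infty_tL^2_y}$ times an $L^1_t$ enhanced-dissipation norm of $\pa_x\om_{\neq}$, which by \eqref{btsp:V^1_0} and \eqref{btsp:pa_xomL1L2L2} is $\lesssim(\ep_0\nu^\beta)(\nu^{-1/2}\|\ln(e+|D_x|)\om_{\neq}(\tau)\|_{L^2_{x,y}})$. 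For $\mathcal N_3=V^2_{\neq}\pa_y\om_0$ the factor $\pa_y\om_0$ enjoys no enhanced decay and is controlled only in $L^2_tL^2_y$ through the enstrophy identity \eqref{eq:NAom}, so it must be paired in $L^1_t$ with $V^2_{\neq}$ in $L^2_tL^\infty_{x,y}$; the log-weighted version needs $\ln(e+|D_x|)V^2_{\neq}$ in $L^\infty$, obtained from \eqref{btsp:V_2L2L2Linfty} via the one-dimensional embedding $\|f-\overline f\|_{L^\infty}\le C\||D_x|^{1/2}\ln(|D_x|+e)f\|_{L^2}$ recorded after \eqref{eq:V_2L2L2Linfty}. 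The purely non-zero interaction $\mathcal N_1$ splits into $V^1_{\neq}\pa_x\om_{\neq}$, bounded by $\|V^1_{\neq}\|_{L^\infty_{t,x,y}}$ from \eqref{btsp:V_1LinftyL1Linfty} times the $L^1_t$ norm of $\pa_x\om_{\neq}$ from \eqref{btsp:pa_xomL1L2L2}, and $V^2_{\neq}\pa_y\om_{\neq}$, bounded by $\|V^2_{\neq}\|_{L^2_tL^\infty}$ from \eqref{btsp:V_2L2LinftyLinfty} against $\|\pa_y\om_{\neq}\|_{L^2_tL^2}$ from \eqref{btsp:pa_xyomL2L2L2}. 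Throughout, the log weight is distributed by subadditivity, $\ln(e+|\al|)\le\ln(e+|\al-\xi|)+\ln(e+|\xi|)$, so that $H^{log}_x$ behaves as an algebra weight; each product carries a spare factor $\|\ln(e+|D_x|)\om_{\neq}(\tau)\|_{L^2_{x,y}}$ together with a gain $\ep_0\nu^{\beta-1/2}$ (or $\ep_0^2$ for the quadratic $\mathcal N_1$).

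Two structural points complete the scheme. First, the pointwise estimate \eqref{btsp:om-point} requires the factor $e^{-c_1\nu^{1/3}t}$ to survive the Duhamel integral: this follows because $S(t,s)$ decays like $e^{-c\nu^{1/3}(t-s)}$ by \eqref{eq:om-Linfty} while the sources inherit the decay of $\om_{\neq}$, so the time-convolution of two such exponentials against the integrable enhanced-dissipation kernels reproduces $e^{-c_1\nu^{1/3}t}$ provided $c_1$ is taken strictly below the linear rate $c$ and $\nu<\nu_0$. Second, the uniform bound \eqref{btsp:V^1_0} on $V_0^1$ is closed from its own Duhamel formula using the heat estimates $\|e^{t\nu\pa_y^2}f\|_{L^2}\le\|f\|_{L^2}$ and $\int_s^\infty\|\pa_ye^{(t-s)\nu\pa_y^2}f\|_{L^2}^2\,dt\le\nu^{-1}\|f\|_{L^2}^2$, pairing $V^2_{\neq}$ from the inviscid-damping bootstrap against $\pa_yV^1_{\neq}$ (again gaining $\nu^{-1/2}$ from the second heat estimate); here the initial smallness $\|V_{in}\|_{L^2}\le\ep_0\nu^\beta$ enters.

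I expect the decisive obstacle to be $\mathcal N_3=V^2_{\neq}\pa_y\om_0$. Since the zero mode $\om_0$ has no enhanced dissipation and the data lie only in $L^2_y$, the sole control on $\pa_y\om_0$ is its $L^2_tL^2_y$ enstrophy bound, which forces $V^2_{\neq}$ all the way into $L^2_tL^\infty_{x,y}$; it is exactly the two-dimensional failure of $H^1\hookrightarrow L^\infty$ that obstructs this bound and makes the logarithmic $x$-regularity, through \eqref{eq:V_2L2LinftyLinfty} and \eqref{eq:V_1LinftyL1Linfty}, indispensable. Balancing the log weights across each product while simultaneously matching time-integrability exponents, so that every source lands in the $L^1_tL^2$ or $L^2_tL^2$ class demanded by the corresponding linear estimate, is the delicate bookkeeping on which the argument hinges; once it is arranged, the universal gain $\nu^{\beta-1/2}\le 1$ together with small $\ep_0$ closes all the estimates simultaneously.
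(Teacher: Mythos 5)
Your overall architecture (Duhamel against the linear semigroup, source estimates gaining $\ep_0\nu^{\beta-1/2}$, smallness of $\ep_0$ to close) matches the paper, but there are genuine gaps in the source estimates, chiefly in $\mathcal{N}_1$. Distributing the logarithm by subadditivity handles the weight but not the derivatives: in the frequency regime where the velocity factor carries the high $x$-frequency, your two H\"older pairings fail. For $V^1_{\neq}\pa_x\om_{\neq}$ with $V^1_{\neq}$ at high frequency, the log weight must land on $V^1_{\neq}$, and the only available control is $\|\ln(e+|D_x|)\pa_xV^1_{\neq}\|_{L^2_tL^2_{x,y}}$ from \eqref{btsp:V_1L2L2L2}; to use it one must transfer the $\pa_x$ from $\om_{\neq}$ onto $V^1_{\neq}$, which requires the frequency localization of a paraproduct (this is exactly \eqref{eq: Ber3}, paired with $\|\om_{\neq}\|_{L^2_tL^{\infty}}$ from \eqref{btsp:omL2L1Linfty}). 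For $V^2_{\neq}\pa_y\om_{\neq}$ with $V^2_{\neq}$ at high frequency, your pairing would need $\|\ln(e+|D_x|)V^2_{\neq}\|_{L^2_tL^{\infty}_{x,y}}$, which the single-log bootstrap does not provide (the embedding after \eqref{eq:V_2L2L2Linfty} would cost a second logarithm); the paper instead puts \emph{no} log on $\pa_y\om_{\neq}$ and absorbs the Bernstein loss into the $|D_x|^{1/2}$ of \eqref{btsp:V_2L2L2Linfty} via \eqref{eq: Ber2}. This is why the paper splits $\mathcal{N}_1$ into four Bony paraproduct pieces with four distinct pairings, using all of \eqref{btsp:pa_xyomL2L2L2}--\eqref{btsp:V_1LinftyL1Linfty}; your scheme never invokes \eqref{btsp:omL2L1Linfty} or \eqref{btsp:V_1L2L2L2}, which is the symptom of the missing regimes. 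A smaller but real error: for $\mathcal{N}_2$ you pair $\|V_0^1\|_{L^2_y}$ against $\|\pa_x\om_{\neq}\|_{L^2_{x,y}}$, which is not a valid H\"older estimate for the product in $L^2_{x,y}$; one needs $\|V_0^1\|_{L^{\infty}_y}$, obtained in the paper by Gagliardo--Nirenberg as $\|V_0^1\|_{L^2_y}^{1/2}\|\om_0\|_{L^2_y}^{1/2}$ (the enstrophy bound supplying the second factor).

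A further structural difference concerns the decay in \eqref{btsp:om-point}. You propose to propagate $e^{-c_1\nu^{1/3}t}$ through the Duhamel integral by convolving decaying kernels, but this requires the sources to decay pointwise in $s$, and $\mathcal{N}_3=V^2_{\neq}\pa_y\om_0$ does not: $\pa_y\om_0$ is controlled only in $L^2_t$ through \eqref{eq:NAom}, with no enhanced dissipation on the zero mode. The paper sidesteps this entirely: it bounds the Duhamel contribution by a constant multiple of $\|\ln(e+|D_x|)\om_{\neq}(\tau)\|_{L^2_{x,y}}$ with no decay, deduces that the energy halves over a time step $t_0\sim(\ln 4M)(c\nu^{1/3})^{-1}$ while staying bounded by $2M$ times its initial value on $[\tau,\tau+t_0]$, and then iterates over such intervals to recover the exponential rate $c_1=c\ln 2/\ln 4M$. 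This iteration is the reason the bootstrap hypotheses are formulated for arbitrary base times $\tau$, a feature your argument does not exploit.
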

This proposition implies Theorem \ref{thm: main} by the standard bootstrap argument. 
Now we begin the proof of Proposition \ref{Prop:btsp}. We need the following lemmas. 

\begin{lemma}\label{lem:V_0}
Under the bootstrap assumptions \eqref{btsp:V^1_0} and \eqref{btsp:om-point}, there is a constant $M_1$ independent of $C_1,c_1$ and $\epsilon_0,\nu$ so that
\beno
\|V_0^1(t)\|_{L^{2}_{y}}
\leq M_1\|V_{in}\|_{L^2_{x,y}}+M_1\|\om_{in}\|_{L^2_{x,y}}\epsilon_0\nu^{\b-1/3}C_1/c_1. 
\eeno
\end{lemma}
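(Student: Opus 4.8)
The plan is to estimate $V_0^1(t)$ using its Duhamel representation already recorded in the excerpt,
\[
V_0^1(t,y)=V_{in}^1(y)-\int_0^t e^{(t-s)\nu\pa_y^2}\Big(({V}^1_{\neq}\pa_x{V}^1_{\neq})_0(s,y)+({V}^2_{\neq}\pa_y{V}^1_{\neq})_0(s,y)\Big)\,ds.
\]
Taking $L^2_y$ norms, the linear term contributes $\|V_{in}^1\|_{L^2_y}\le\|V_{in}\|_{L^2_{x,y}}$ since $e^{t\nu\pa_y^2}$ is a contraction on $L^2$. The task reduces to bounding the two quadratic (in the nonzero modes) forcing terms. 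The key trick will be to rewrite each nonlinearity as a $y$-derivative so that one derivative lands on the heat semigroup: using incompressibility $\pa_x V^1_{\neq}+\pa_y V^2_{\neq}=0$ one writes $(V^1_{\neq}\pa_x V^1_{\neq})_0+(V^2_{\neq}\pa_y V^1_{\neq})_0=\pa_y(V^2_{\neq}V^1_{\neq})_0$, so the forcing is $-\int_0^t \pa_y e^{(t-s)\nu\pa_y^2}\big((V^2_{\neq}V^1_{\neq})_0\big)\,ds$.

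Next I would apply the smoothing estimate for the heat semigroup stated just above the lemma, namely $\int_s^\infty\|\pa_y e^{(t-s)\nu\pa_y^2}f\|_{L^2}^2\,dt\le\nu^{-1}\|f\|_{L^2}^2$, combined with a Minkowski/duality argument in time to turn the $s$-integral into a product of a time integral of the source against the $L^2_t$ smoothing bound. Concretely, pairing with a test function and using Cauchy–Schwarz in $s$, the nonlinear contribution is controlled by $\nu^{-1/2}\,\|(V^2_{\neq}V^1_{\neq})_0\|_{L^1_s L^2_y}$ up to absolute constants. I then estimate $\|(V^2_{\neq}V^1_{\neq})_0\|_{L^2_y}\le\|V^1_{\neq}\|_{L^\infty_{x,y}}\|V^2_{\neq}\|_{L^2_{x,y}}$ (or the symmetric split), putting the $L^\infty$ control on $V^1_{\neq}$, which is exactly what the bootstrap assumption \eqref{btsp:V_1LinftyL1Linfty} supplies, and the $L^2$ control on $V^2_{\neq}$, which by Poincaré/elliptic estimates is bounded by $\|\om_{\neq}\|_{L^2_{x,y}}$.

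The main obstacle — and the place where the enhanced-dissipation rate $\nu^{1/3}$ enters — is bounding the time integral $\int_0^t\|V^1_{\neq}(s)\|_{L^\infty_{x,y}}\|V^2_{\neq}(s)\|_{L^2_{x,y}}\,ds$ in a way that produces the stated power $\nu^{\b-1/3}C_1/c_1$ rather than a worse power. For this I would insert the pointwise enhanced-dissipation decay of $\om_{\neq}$: by \eqref{btsp:om-point}, $\|\om_{\neq}(s)\|_{L^2_{x,y}}\lesssim C_1 e^{-c_1\nu^{1/3}s}\|\ln(e+|D_x|)\om_{\neq}(0)\|_{L^2_{x,y}}$, so $\|V^2_{\neq}(s)\|_{L^2_{x,y}}$ inherits the same exponential decay, while $\|V^1_{\neq}(s)\|_{L^\infty_{x,y}}$ is uniformly bounded by $\eqref{btsp:V_1LinftyL1Linfty}$ in terms of $\|\ln(e+|D_x|)\om_{\neq}(\cdot)\|_{L^2}$, which by \eqref{lem:tau-0} is $\lesssim C_1\ep_0\nu^\b$. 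The time integral $\int_0^\infty e^{-c_1\nu^{1/3}s}\,ds=(c_1\nu^{1/3})^{-1}$ then generates the factor $\nu^{-1/3}/c_1$. Collecting the constants, the nonlinear term is bounded by $M_1\|\om_{in}\|_{L^2_{x,y}}\ep_0\nu^{\b-1/3}C_1/c_1$, with $M_1$ absorbing the heat-smoothing constant, the elliptic constants relating $V$ to $\om$, and the absolute numerical factors, and crucially independent of $C_1,c_1,\ep_0,\nu$. Care is needed to ensure the $\nu^{-1/2}$ from heat smoothing and the $\nu^{-1/3}$ from the time integral are not both charged; the correct bookkeeping uses the $L^2_t$-smoothing estimate only once and pairs it against the exponentially decaying source, so that only the single $\nu^{-1/3}$ time factor survives — verifying this cancellation is the delicate accounting step.
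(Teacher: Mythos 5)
Your overall strategy diverges from the paper's at the key step, and the divergence creates a genuine gap: the integration by parts $({V}^1_{\neq}\pa_x{V}^1_{\neq})_0+({V}^2_{\neq}\pa_y{V}^1_{\neq})_0=\pa_y({V}^1_{\neq}{V}^2_{\neq})_0$ followed by the heat-smoothing bound $\int_s^\infty\|\pa_y e^{(t-s)\nu\pa_y^2}f\|_{L^2}^2\,dt\le \nu^{-1}\|f\|_{L^2}^2$ necessarily charges a factor $\nu^{-1/2}$, and the exponential decay of the source only returns $\|e^{-c_1\nu^{1/3}s}\|_{L^2_s}\sim (c_1\nu^{1/3})^{-1/2}=c_1^{-1/2}\nu^{-1/6}$ when you apply Cauchy--Schwarz in $s$ (note also that Cauchy--Schwarz produces the $L^2_s$ norm of the source, not the $L^1_s$ norm you wrote). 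Tracking powers, your route yields a bound of size $\epsilon_0^2\nu^{2\beta-2/3}$ for the nonlinear contribution, whereas the lemma asserts $\|\om_{in}\|_{L^2}\epsilon_0\nu^{\beta-1/3}C_1/c_1\sim\epsilon_0^2\nu^{2\beta-1/3}$; you are short by a factor $\nu^{1/3}$. This is not a bookkeeping subtlety that a ``delicate accounting'' can repair --- there is no cancellation between the $\nu^{-1/2}$ from smoothing and the time integral --- and with the weaker exponent the bootstrap for $V_0^1$ would only close for $\beta\ge 2/3$, not the $\beta\ge 1/2$ the theorem requires. A secondary issue: you invoke \eqref{btsp:V_1LinftyL1Linfty}, which is not among the two hypotheses \eqref{btsp:V^1_0} and \eqref{btsp:om-point} under which the lemma is stated.

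The fix is to \emph{not} integrate by parts. The paper keeps the derivative on $V^1_{\neq}$, uses only the $L^2$-contraction of $e^{(t-s)\nu\pa_y^2}$ (no smoothing), and estimates the source pointwise in time by
\[
\|({V}_{\neq}\cdot\nabla {V}^1_{\neq})_0\|_{L^2_y}\le \|{V}_{\neq}\|_{L^2_xL^\infty_y}\|\nabla {V}^1_{\neq}\|_{L^2_{x,y}}\le C\|\om_{\neq}\|_{L^2_{x,y}}^2,
\]
the first factor via Gagliardo--Nirenberg in $y$ and the Biot--Savart bounds $\|V_{\neq}\|_{L^2}+\|\nabla V_{\neq}\|_{L^2}\lesssim\|\om_{\neq}\|_{L^2}$. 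Then \eqref{btsp:om-point} (one factor of the decaying bound) together with enstrophy conservation (for the other factor, producing $\|\om_{in}\|_{L^2}$) gives
\[
\int_0^t\|\om_{\neq}(s)\|_{L^2}^2\,ds\le C C_1\epsilon_0\nu^{\beta}\|\om_{in}\|_{L^2}\int_0^\infty e^{-c_1\nu^{1/3}s}\,ds= C C_1 c_1^{-1}\epsilon_0\nu^{\beta-1/3}\|\om_{in}\|_{L^2},
\]
which is exactly the claimed bound. Your treatment of the linear term and your identification of where the $\nu^{-1/3}/c_1$ comes from are correct; it is the insertion of the extra $\pa_y$ on the semigroup that derails the estimate.
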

\begin{proof}
We have 
\begin{align*}
\|{V}_0^1(t)\|_{L^{2}_{y}}
&\leq \|e^{t\nu\pa_y^2}{V}_{in}^1(0)\|_{L^{2}_{y}}
+\left\|\int_0^te^{(t-s)\nu\pa_y^2}\Big(\big({V}^1_{\neq}\pa_x{V}^1_{\neq}\big)_0+\big({V}^2_{\neq}\pa_y{V}^1_{\neq}\big)_0\Big)ds\right\|_{L^2_y}\\
&\leq C\|{V}_{in}\|_{L^2_{x,y}}
+\left\|({V}_{\neq}\cdot\na{V}^1_{\neq})_0\right\|_{L^1_tL^2_y}.
\end{align*}
By the fact that 
\begin{align*}
\|(V_{\neq}\na V^1_{\neq})_0\|_{L^2_y}\leq \|V_{\neq}\|_{L^2_{x}L^{\infty}_y}\|\na V^1_{\neq}\|_{L^2_{x,y}}
\leq \|\om_{\neq}\|_{L^2_{x,y}}^2,
\end{align*}
and the bootstrap assumption \eqref{btsp:om-point}, we have
\begin{align*}
&\left\|({V}_{\neq}\cdot\na{V}^1_{\neq})_0\right\|_{L^1_tL^2_y}
\leq C\int_0^t\|\om_{\neq}(s)\|_{L^2_{x,y}}^2ds\\ 
&\leq C{C}_1\int_0^t e^{-c_1\nu^{1/3}s}ds\|{\om}_{\neq}(0)\|_{L^2_{x,y}}^2
\leq C\|\om_{in}\|_{L^{2}_{x,y}}\epsilon_0\nu^{\b-1/3}C_1/c_1.
\end{align*}
Here we also used the enstrophy conservation law \eqref{eq:basic energy}. 
This gives the lemma. 
\end{proof}
\begin{lemma}\label{lem: nonlinear-est}
Under the bootstrap assumptions \eqref{btsp:V^1_0}-\eqref{btsp:V_1LinftyL1Linfty}, there is a constant $M_2$ independent of $C_k,\ (k=0,...,8)$ and $\epsilon_0,\nu$ so that for any $t,\tau>0$ and $t+\tau<T$, it holds that
\begin{align*}
&\sum_{k=1}^3\|\ln(e+|D_x|)\mathcal{N}_k(s+\tau)\|_{L^1_s([0,t],L^2_{x,y})}\\
&\leq M_2\ep_0\nu^{\beta-\f12}C_1(C_2C_5+C_6C_2+C_2C_0^{\f12}+C_4C_7+C_3C_8)\|\ln(e+|D_x|){\om}(\tau)\|_{L^2_{x,y}}.
\end{align*}
\end{lemma}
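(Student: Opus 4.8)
The plan is to estimate each of the three nonlinear terms $\mathcal{N}_1,\mathcal{N}_2,\mathcal{N}_3$ separately in the norm $\|\ln(e+|D_x|)\,\cdot\,\|_{L^1_s([0,t],L^2_{x,y})}$, using H\"older in $s$ to split the product into two factors and then invoking the bootstrap assumptions \eqref{btsp:V^1_0}--\eqref{btsp:V_1LinftyL1Linfty} to bound each factor. The key technical point throughout is that $\ln(e+|D_x|)$ behaves like an algebra-friendly Fourier multiplier: since $\ln(e+|\al|)\leq \ln(e+|\al-\gamma|)+\ln(e+|\gamma|)$, the logarithmic weight can be distributed across a product (essentially a logarithmic Leibniz rule), so that $\|\ln(e+|D_x|)(fg)\|$ is controlled by $\|\ln(e+|D_x|)f\|\,\|g\|+\|f\|\,\|\ln(e+|D_x|)g\|$ in the appropriate mixed norms. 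I would state this distribution property once at the start and then apply it uniformly.

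For the main-mode-free interaction $\mathcal{N}_1=(V_{\neq}^1\pa_x\om_{\neq})_{\neq}+(V_{\neq}^2\pa_y\om_{\neq})_{\neq}$, the natural split is $L^1_s\le \|\cdot\|_{L^2_s}\|\cdot\|_{L^2_s}$ term by term. For the first piece I would put $V_{\neq}^1$ in $L^\infty_{x,y}$ (controlled in $L^\infty_s$ by \eqref{btsp:V_1LinftyL1Linfty}, giving the factor $C_8$) and $\pa_x\om_{\neq}$ in $L^2_s L^2_{x,y}$ (controlled via \eqref{btsp:pa_xomL1L2L2}; here one must be slightly careful, since \eqref{btsp:pa_xomL1L2L2} is an $L^1_s$ bound, so I would instead pair it with \eqref{btsp:omL2L1Linfty} or use the $L^2_s$ dissipation bound \eqref{btsp:pa_xyomL2L2L2}, yielding $C_3$ or $C_2$). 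For the second piece I would place $V_{\neq}^2$ in $L^2_s L^2_x L^\infty_y$ via \eqref{btsp:V_2L2L2Linfty} (factor $C_6$) and $\pa_y\om_{\neq}$ in $L^2_s L^2_{x,y}$ via the gradient bound \eqref{btsp:pa_xyomL2L2L2} (factor $C_2$), matching the $C_6 C_2$ term in the claimed estimate. For $\mathcal{N}_2=V_0^1\pa_x\om_{\neq}$, I would take $V_0^1$ in $L^\infty_s L^2_y$ through \eqref{btsp:V^1_0} (factor $C_0^{1/2}$, since it enters as a shear-in-$y$ coefficient and the $C_0^{1/2}$ power in the statement signals a square-root or interpolation step) and $\pa_x\om_{\neq}$ in $L^1_s$ through \eqref{btsp:pa_xomL1L2L2} (factor $C_2$), producing the $C_2 C_0^{1/2}$ term. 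For $\mathcal{N}_3=V_{\neq}^2\pa_y\om_0$, I would pair $V_{\neq}^2$ in $L^\infty_{x,y}$ via \eqref{btsp:V_2L2LinftyLinfty} (factor $C_5$) against $\pa_y\om_0$ in $L^2_s L^2_y$, whose control over $[\tau,\tau+t]$ comes from the enstrophy/energy dissipation identity \eqref{eq:NAom}; the $\nu^{-1/2}$ from that dissipation bound is exactly what produces the overall $\nu^{\beta-1/2}$ scaling. The $C_4 C_7$ term then arises from an alternative pairing of \eqref{btsp:omL2L1Linfty} against \eqref{btsp:V_1L2L2L2} handling the other component or the $\pa_y\om_0$-with-log-weight contribution.

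In every pairing I would finally replace the occurring factor $\|\ln(e+|D_x|)\om_{\neq}(\tau)\|_{L^2_{x,y}}$ (or its $L^2$-dissipation integral over $[\tau,T]$) by the right-hand side of \eqref{lem:tau-0}, i.e. bound the ``first'' factor by $8C_1\ep_0\nu^\beta$, which extracts the common prefactor $\ep_0\nu^{\beta}C_1$; collecting the $\nu^{-1/2}$ that comes from the dissipation-type bounds on the ``second'' factor produces the net scaling $\ep_0\nu^{\beta-1/2}C_1$ multiplying the sum of products of the remaining constants, exactly as stated. Since $\beta\ge 1/2$ this net factor is $\lesssim \ep_0$, which is the whole point of the smallness assumption.

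I expect the main obstacle to be the bookkeeping needed to make the $L^1_s$ time-integral estimates close with the right constants while the bootstrap assumptions mix $L^1_s$, $L^2_s$ and $L^\infty_s$ norms: one cannot naively apply H\"older to every term, and for $\mathcal{N}_2$ and $\mathcal{N}_3$ one has to feed in the dissipation identity \eqref{eq:NAom} rather than a bootstrap bound, since $\pa_y\om_0$ is only controlled through energy conservation. The other delicate point is ensuring the logarithmic weight is genuinely subadditive across the nonlinear product so that $\ln(e+|D_x|)$ lands on the factor carrying the log-regularity; verifying that the low-frequency factor (which need not carry the log weight) is still controlled in $L^\infty_{x,y}$ is where the choices \eqref{btsp:V_1LinftyL1Linfty} and \eqref{btsp:V_2L2LinftyLinfty} of $L^\infty$ bounds, rather than $H^1$ bounds, become essential — this is precisely the failure of $H^1\hookrightarrow L^\infty$ in two dimensions alluded to in the introduction.
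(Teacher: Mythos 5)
Your global architecture matches the paper's: estimate $\mathcal{N}_1,\mathcal{N}_2,\mathcal{N}_3$ separately, split each product by H\"older in $s$, feed in the bootstrap bounds, use the enstrophy identity \eqref{eq:NAom} for $\pa_y\om_0$, and convert one factor via \eqref{lem:tau-0} into the prefactor $\ep_0\nu^{\beta}C_1$. The pairings you propose for $V^2_{\neq}\pa_y\om_{\neq}$ (namely \eqref{btsp:V_2L2L2Linfty} against \eqref{btsp:pa_xyomL2L2L2}, giving $C_6C_2$), for $\mathcal{N}_2$ (an interpolation giving $C_0^{1/2}$ times the $L^1_s$ bound on $\ln(e+|D_x|)\pa_x\om_{\neq}$ --- the paper makes this precise via Gagliardo--Nirenberg, $\|V^1_0\|_{L^\infty_y}\le\|V^1_0\|_{L^2_y}^{1/2}\|\om_0\|_{L^2_y}^{1/2}$), and for $\mathcal{N}_3$ are essentially the paper's, up to which constant $C_k$ you attach to each factor.

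The genuine gap is in your mechanism for distributing the logarithmic weight. The pointwise subadditivity $\ln(e+|\al|)\le\ln(e+|\al-\gamma|)+\ln(e+|\gamma|)$ produces \emph{two} terms for each product, and the one where the log lands on the velocity factor does not close by naive H\"older: for $V^1_{\neq}\pa_x\om_{\neq}$ you would need to pair $\ln(e+|D_x|)V^1_{\neq}$ against $\pa_x\om_{\neq}$ measured in $L^\infty_{x,y}$ or $L^2_xL^\infty_y$, neither of which the bootstrap controls. The paper resolves this with Bony's decomposition in $x$, writing $\cN_1=T_{\pa_x\om_{\neq}}V^1_{\neq}+T^*_{V^1_{\neq}}\pa_x\om_{\neq}+T^*_{V^2_{\neq}}\pa_y\om_{\neq}+T_{\pa_y\om_{\neq}}V^2_{\neq}$, so that the log always lands on the high-frequency factor, and --- crucially for $T_{\pa_x\om_{\neq}}V^1_{\neq}$ --- uses inequality \eqref{eq: Ber3} to \emph{transfer the $x$-derivative} from the low-frequency factor $\om_{\neq}$ onto the high-frequency factor $V^1_{\neq}$, yielding the pairing $\|\om_{\neq}\|_{L^2_sL^\infty_{x,y}}\cdot\|\ln(e+|D_x|)\pa_xV^1_{\neq}\|_{L^2_sL^2_{x,y}}$, i.e.\ the $C_4C_7$ term (while $C_3C_8$ comes from $T^*_{V^1_{\neq}}\pa_x\om_{\neq}$ via \eqref{eq: Ber1}, and the $|D_x|^{1/2}$ gain in \eqref{eq: Ber2} is what lets $T_{\pa_y\om_{\neq}}V^2_{\neq}$ close with $\pa_y\om_{\neq}$ only in $L^2_x$). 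Your hedged attribution of the $C_4C_7$ and $C_3C_8$ terms, and your proposed $C_5$ pairing for $\mathcal{N}_3$ (which drops the log from $V^2_{\neq}$; the paper uses $C_6$ there since the log must sit entirely on $V^2_{\neq}$ because $\om_0$ is $x$-independent), are symptoms of this missing frequency-localized step. Without the paraproduct structure and \eqref{eq: Ber1}--\eqref{eq: Ber3}, the estimate of $\mathcal{N}_1$ does not close.
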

\begin{proof}
Let us fist recall the Littlewood-Paley theory and the Bony's decomposition on $\mathbb{T}$ which can be found in Section 4.1.2.

According to the Bony's decomposition, we divide $\cN_1=V^1_{\neq}\pa_x\om_{\neq}+V^2_{\neq}\pa_y\om_{\neq}$ into four terms 
\beno
\cN_1=T_{\pa_x\om_{\neq}}{V^1_{\neq}}+T^*_{V^1_{\neq}}{\pa_x\om_{\neq}}+T^*_{V^2_{\neq}}{\pa_y\om_{\neq}}+T_{\pa_y\om_{\neq}}{V^2_{\neq}}.
\eeno 
Thus we have
\begin{align*}
\|\ln(e+|D_x|)\cN_1(s+\tau)\|_{L^1_s([0,t],L^2_{x,y})}
&=\int_0^t\|\ln(e+|D_x|)\cN_1(s+\tau)\|_{L^2_{x,y}}ds\\
&\leq C\|\ln(e+|D_x|)T_{\pa_x\om_{\neq}}{V^1_{\neq}}\|_{L^1_s([0,t],L^2_{x,y})}\\
&\quad+C\|\ln(e+|D_x|)T^*_{V^1_{\neq}}{\pa_x\om_{\neq}}\|_{L^1_s([0,t],L^2_{x,y})}\\
&\quad+C\|\ln(e+|D_x|)T^*_{V^2_{\neq}}{\pa_y\om_{\neq}}\|_{L^1_s([0,t],L^2_{x,y})}\\
&\quad+C\|\ln(e+|D_x|)T_{\pa_y\om_{\neq}}{V^2_{\neq}}\|_{L^1_s([0,t],L^2_{x,y})}\\
&=N_{1,1}+N_{1,2}+N_{1,3}+N_{1,4}.
\end{align*}
By the bootstrap assumptions \eqref{btsp:V_1L2L2L2}, \eqref{btsp:omL2L1Linfty} and using \eqref{lem:tau-0} and \eqref{eq: Ber3}, we have
\begin{align*}
N_{1,1}&\leq C\|\ln(e+|D_x|)\pa_x{V}^1_{\neq}(s+\tau)\|_{L^2_s([0,t], L^2_{x,y})}\|{\om}_{\neq}(s+\tau)\|_{L^2_s([0,t],L^{\infty}_{x,y})}\\
&\leq CC_7C_4\nu^{-\f12}\|\ln(e+|D_x|)\om_{\neq}(\tau)\|_{L^2_{x,y}}^2\\
&\leq CC_1C_7C_4\ep_0\nu^{\b-\f12}\|\ln(e+|D_x|)\om_{\neq}(\tau)\|_{L^2_{x,y}}.
\end{align*}
By the bootstrap assumptions \eqref{btsp:V_1LinftyL1Linfty} and \eqref{btsp:pa_xomL1L2L2} and using \eqref{eq: Ber1}, we have
\begin{align*}
N_{1,2}&\leq C\|{V}^1_{\neq}(s+\tau,\b,y)\|_{L^{\infty}_sL^{\infty}_{x,y}}\|\ln(e+|D_x|)\pa_x{\om}_{\neq}(s+\tau)\|_{L^1_s([0,t],L^2_{x,y})}\\
&\leq C\ep_0\nu^{\b-\f12}C_1C_8C_3\|\ln(e+|D_x|)\om_{\neq}(\tau)\|_{L^2_{x,y}}.
\end{align*}
By the bootstrap assumptions \eqref{btsp:V_2L2LinftyLinfty} and \eqref{btsp:pa_xyomL2L2L2} and using \eqref{eq: Ber1}, we have
\begin{align*}
N_{1,3}&\leq C\|\widehat{V}_{\neq}^2(s+\tau)\|_{L^2_s([0,t],L^{\infty}_{x,y})}\|\ln(e+|D_x|)\pa_y{\om}_{\neq}(s+\tau)\|_{L^2_s([0,t],L^2_{x,y})}\\
&\leq C\ep_0\nu^{\b-\f12}C_1C_5C_2\|\ln(e+|D_x|)\om_{\neq}(\tau)\|_{L^2_{x,y}}.
\end{align*}
By the bootstrap assumptions \eqref{btsp:V_2L2L2Linfty} and \eqref{btsp:pa_xyomL2L2L2} and using \eqref{eq: Ber2}, we have
\begin{align*}
N_{1,4}&\leq C\||D_x|^{\f12}\ln(e+|D_x|)V_{\neq}^2(s+\tau)\|_{L^2_s([0,t],L^2_xL^{\infty}_y)}\|\pa_y\om_{\neq}\|_{L^2_s([0,t],L^2_{x,y})}\\
&\leq C\ep_0\nu^{\b-\f12}C_1C_6C_2\|\ln(e+|D_x|)\om_{\neq}(\tau)\|_{L^2_{x,y}}.
\end{align*}
We need the $log$-type regularity only in the estimates of $N_{1,2}$ and $N_{1,3}$, which are due to the fact that $V_{\neq}$ is in lower frequency in $x$, so we need to use $L^{\infty}_{x,y}$ estimate on $V_{\neq}$. For $N_{1,2}$ we use the enhanced dissipation and to treat $N_{1,3}$ we use inviscid damping. 

Thus we have finished the estimate of $\mathcal{N}_1$. \\

Now we deal with $\cN_2$.
By the fact that $\|{\om}_0(t,y)\|_{L^2_y}\leq \|{\om}(t,x,y)\|_{L^2_{x,y}}\leq \|{\om}(\tau,x,y)\|_{L^2_{x,y}}$ for any $\tau<t$ the bootstrap assumptions \eqref{btsp:V^1_0} and \eqref{btsp:pa_xomL1L2L2}, we have
\begin{align*}
&\|\ln(e+|D_x|)\cN_2(s+\tau)\|_{L^1_s([0,t],L^2_{x,y})}\\
&\leq C\int_0^t\|{V}_0^1(s+\tau,y)\|_{L^{\infty}}\|\ln(e+|D_x|)\pa_x\om_{\neq}(s+\tau)\|_{L^2_{x,y}}ds\\
&\leq C\|{V}_0^1(\tau,y)\|_{L^2}^{\f12}\|{\om}_{in}(x,y)\|_{L^2_{x,y}}^{\f12}\|\ln(e+|D_x|)\pa_x\om_{\neq}(s+\tau,\al,\cdot)\|_{L_s^1([0,t],L^2_{x,y})}\\
&\leq CC_0^{\f12}C_2\epsilon_0\nu^{\beta-\f12}\|\ln(e+|D_x|)\om_{\neq}(\tau,\al,y)\|_{L^2_{x,y}}.
\end{align*}

At last we deal with $\cN_3$. By the bootstrap assumption \eqref{btsp:V_2L2L2Linfty} and the fact that 
$$
\|\pa_y{\om}_0(s+\tau,\cdot)\|_{L^2_s([0,t],L^2_y)}\leq \|\pa_y{\om}(s+\tau,x,y)\|_{L^2_s([0,t],L^2_{x,y})}\leq C\nu^{-\f12}\|\widehat{\om}(\tau,x,y)\|_{L^2_{x,y}},
$$ 
we have
\begin{align*}
&\|\ln(e+|D_x|)\cN_3(s+\tau)\|_{L^1_s([0,t],L^2_{x,y})}\\
&\leq C\|\ln(e+|D_x|){V}^2_{\neq}(s+\tau,x,y)\|_{L^2_s([0,t],L^2_xL_y^{\infty})}\|\pa_y{\om}_0(s+\tau,y)\|_{L^2_s([0,t],L^2)}\\
&\leq CC_6\epsilon\nu^{\beta-\f12}\|\ln(e+|D_x|)\om_{\neq}(\tau,\al,y)\|_{L^2_{x,y}}.
\end{align*}
Thus we proved the lemma. 
\end{proof}
Now we are in a position to prove Proposition \ref{Prop:btsp}.
\begin{proof}
Under the bootstrap assumptions \eqref{btsp:V^1_0}-\eqref{btsp:V_1LinftyL1Linfty}, there is a constant $M$ independent of $C_k,\ (k=0,...,8)$ and $\epsilon_0,\nu$ so that for any $t,\tau>0$ and $t+\tau<T$, it holds that
\beq\label{eq:1}
\begin{split}
&\|\ln(e+|D_x|){\om}_{\neq}(t+\tau)\|_{L^2_{x,y}}\\
&\leq Me^{-c\nu^{\f13}t}\|\ln(e+|D_x|){\om}_{\neq}(\tau)\|_{L^2_{x,y}}\\
&\quad+MC_1\big(\ep_0\nu^{\beta-\f12}(C_2C_5+C_2C_6+C_2C_0^{\f12}+C_4C_7+C_3C_8)\big)
\|\ln(e+|D_x|){\om}_{\neq}(\tau)\|_{L^2_{x,y}}\\
&\leq M\big(e^{-c\nu^{\f13}t}+5C_1\ep_0\nu^{\beta-\f12}X^2\big)\|\ln(e+|D_x|){\om}_{\neq}(\tau)\|_{L^2_{x,y}},
\end{split}
\eeq
where $X=\max\{C_0,C_2,C_3,C_4,C_5,C_6,C_7,C_8\}$.

By \eqref{eq:om-Linfty}-\eqref{eq:V_1LinftyL1Linfty} and Lemma \ref{lem: nonlinear-est}, we have 
\begin{align*}
&\nu^{\f12}\left(\int_{\tau}^T\|\ln(e+|D_x|)\na{\om}_{\neq}(s)\|_{L^2_{x,y}}^2ds\right)^{\f12}
+\nu^{\f12}\int_{\tau}^T\|\ln(e+|D_x|)\pa_x{\om}_{\neq}(s)\|_{L^2_{x,y}}ds\\
&\quad+\nu^{\f12}\left(\int_{\tau}^T\|{\om}_{\neq}(s)\|_{L^{\infty}_{x,y}}^2ds\right)^{\f12}
+\left(\int_{\tau}^{T}\|{V}^2_{\neq}(s)\|_{L^{\infty}_{x,y}}^2dt\right)^{\f12}\\
&\quad+\left(\int_{\tau}^{T}\||D_x|^{\f12}\ln(e+|D_x|){V}_{\neq}^2(s)\|_{L^2_xL^{\infty}_y}^2ds\right)^{\f12}
+\left(\int_{\tau}^{T}\|\ln(e+|D_x|)\pa_x{V}_{\neq}^1(s)\|_{L^2_{x,y}}^2ds\right)^{\f12}\\
&\quad+\sup_{s\in[\tau,T]}\|{V}_{\neq}^1(s)\|_{L^{\infty}_{x,y}}\\
&\leq \nu^{\f12}\int_{\tau}^{\infty}\|\ln(e+|D_x|)\na S(t,\tau)\om_{\neq}(\tau)\|_{L^2_{x,y}}^2dt
+\nu^{\f12}\int_{\tau}^{\infty}\|\ln(e+|D_x|)\pa_xS(t,\tau)\om_{\neq}(\tau)\|_{L^2_{x,y}}dt\\
&\quad+\nu^{\f12}\left(\int_{\tau}^{\infty}\|S(t,\tau)\om_{\neq}(\tau)\|_{L^{\infty}_{x,y}}dt\right)^{\f12}
+\left(\int_{\tau}^{\infty}\|\pa_x(-\Delta)^{-1}S(t,\tau)\om_{\neq}(\tau)\|_{L^{\infty}_{x,y}}^2dt\right)^{\f12}\\
&\quad+\left(\int_{\tau}^{\infty}\||D_x|^{\f12}\ln(e+|D_x|)\pa_x(-\Delta)^{-1}S(t,\tau)\om_{\neq}(\tau)\|_{L^2_xL^{\infty}_y}^2dt\right)^{\f12}\\
&\quad+\left(\int_{\tau}^{\infty}\|\ln(e+|D_x|)\pa_x\pa_y(-\Delta)^{-1}S(t,\tau)\om_{\neq}(\tau)\|_{L^2_{x,y}}^2dt\right)^{\f12}\\
&\quad+\sup_{t\in[s,\infty)}\|\pa_y(-\Delta)^{-1}S(t,\tau)\om_{\neq}(\tau)\|_{L^{\infty}_{x,y}}
+\sum_{k=1}^3\|\ln(e+|D_x|)\mathcal{N}_k\|_{L^1_s([0,t],L^2_{x,y})}\\
&\leq M_3\big(1+\ep_0\nu^{\beta-\f12}C_1(C_2C_5+C_2C_6+C_2C_0^{\f12}+C_4C_7+C_3C_8)\big)
\|\ln(e+|D_x|){\om}_{\neq}(\tau)\|_{L^2_{x,y}}\\
&\leq M_3\big(1+5\ep_0\nu^{\beta-\f12}C_1X^2\big)
\|\ln(e+|D_x|){\om}_{\neq}(\tau)\|_{L^2_{x,y}},
\end{align*}
where $X=\max\{C_0,C_2,C_3,C_4,C_5,C_6,C_7,C_8\}$. 

By Lemma \ref{lem:V_0}, we get 
\ben\label{eq:V_0-btsp-proof}
\|V_0^1(t)\|_{L^{2}_{y}}
\leq M_1(1+\epsilon_0\nu^{\b-1/3}C_1/c_1)\ep_0\nu^{\b}. 
\een
Here without loss of generality, we assume $M_1\leq M_3$. 

At last we will determine those constants in the bootstrap assumption. 
The proposition holds if we choose the constants $C_k\, (k=0,1,...,8)$ and $\ep_0$, $c_1$ in the following way.
\begin{align*}
&C_k=\max\{M_3,1\}=X,\quad k=0,2,...,8,\\
&C_1=5\max\{M,1\},\quad c_1=\f{c\ln 2}{\ln 4M},\\
&\ep_0=10^{-2}(\max\{M_3,1\})^{-2}(\max\{M,1\})^{-2}c,
\end{align*}
where $M$ is the constant in \eqref{eq:1}. 

Actually we have 
\beq\label{eq: proof-btsp-3-9}
\begin{split}
&\nu^{\f12}\left(\int_{\tau}^T\|\ln(e+|D_x|)\na{\om}_{\neq}(s)\|_{L^2_{x,y}}^2ds\right)^{\f12}\\
&+\nu^{\f12}\int_{\tau}^T\|\ln(e+|D_x|)\pa_x{\om}_{\neq}(s)\|_{L^2_{x,y}}ds\\
&\quad+\nu^{\f12}\left(\int_{\tau}^T\|{\om}_{\neq}(s)\|_{L^{\infty}_{x,y}}^2ds\right)^{\f12}
+\left(\int_{\tau}^{T}\|{V}^2_{\neq}(s)\|_{L^{\infty}_{x,y}}^2dt\right)^{\f12}\\
&\quad+\left(\int_{\tau}^{T}\||D_x|^{\f12}\ln(e+|D_x|){V}_{\neq}^2(s)\|_{L^2_xL^{\infty}_y}^2ds\right)^{\f12}\\
&+\left(\int_{\tau}^{T}\|\ln(e+|D_x|)\pa_x{V}_{\neq}^1(s)\|_{L^2_{x,y}}^2ds\right)^{\f12}
+\sup_{s\in[\tau,T]}\|{V}_{\neq}^1(s)\|_{L^{\infty}_{x,y}}\\
&\leq M_3\big(1+5\ep_0\nu^{\beta-\f12}C_1X^2\big)
\|\ln(e+|D_x|){\om}_{\neq}(\tau)\|_{L^2_{x,y}}\\
&\leq 4X\|\ln(e+|D_x|){\om}_{\neq}(\tau)\|_{L^2_{x,y}}.
\end{split}
\eeq
Thus \eqref{btsp:pa_xyomL2L2L2}-\eqref{btsp:V_1LinftyL1Linfty} hold with all the occurrences of $8$ on the right-hand side replaced by $4$.

Then we get by \eqref{eq:1} that there exists $t_0=(\ln 4M)(c\nu^{\f13})^{-1}$, so that for any $\tau,\tau+t_0\in[0,T]$,
\ben\label{eq: it-om}
\|\ln(e+|D_x|){\om}_{\neq}(\tau+t_0)\|_{L^2_{x,y}}\leq \f12\|\ln(e+|D_x|){\om}_{\neq}(\tau)\|_{L^2_{x,y}},
\een
and for any $0< s\leq t_0$ and $\tau,\tau+s\in[0,T]$,
\ben\label{eq: it-om2}
\|\ln(e+|D_x|){\om}_{\neq}(\tau+s)\|_{L^2_{x,y}}\leq 2M\|\ln(e+|D_x|){\om}_{\neq}(\tau)\|_{L^2_{x,y}}.
\een
For any $t+\tau,\tau\in [0,T]$ with $t\geq0 $, let $t=nt_0+s$ with $n=[t/t_0]\geq 0$ and $s\in (0,t_0]$. Therefore, by \eqref{eq: it-om}, we get for any $t+\tau,\tau\in [0,T]$ with $t\geq0$,
\begin{align*}
\|\ln(e+|D_x|){\om}_{\neq}(t+\tau)\|_{L^2_{x,y}}
&=\|\ln(e+|D_x|){\om}_{\neq}(nt_0+s+\tau)\|_{L^2_{x,y}}\\
&\leq \f{1}{2}\|\ln(e+|D_x|){\om}_{\neq}((n-1)t_0+s+\tau)\|_{L^2_{x,y}}\\
&\leq \f{1}{2^{[t/t_0]}}\|\ln(e+|D_x|){\om}_{\neq}(s+\tau)\|_{L^2_{x,y}}.
\end{align*}
Then by \eqref{eq: it-om2}, it holds that
\beq\label{eq: proof-bt-om}
\|\ln(e+|D_x|){\om}_{\neq}(t+\tau)\|_{L^2_{x,y}}
\leq 2Me^{-(\ln2)t/t_0+1}\|\ln(e+|D_x|){\om}_{\neq}(\tau)\|_{L^2_{x,y}}.
\eeq
According to the definition of $c_1$, $C_1$, we get for any $t>0$
\beno
2Me^{-(\ln2)t/t_0+1}\leq 4{C_1}e^{-c_1\nu^{\f13}t}.
\eeno
Thus \eqref{eq: proof-bt-om} implies that \eqref{btsp:om-point} holds with the occurrence of $8$ on the right-hand side replaced by $4$. 

At last we have
\beno
M_1+M_1\epsilon_0\nu^{\b-1/3}C_1/c_1\leq 4C_0.
\eeno
Then by \eqref{eq:V_0-btsp-proof}, we proved that \eqref{btsp:V^1_0} holds with the occurrence of $8$ on the right-hand side replaced by $4$. 
Thus we proved the proposition.
\end{proof}

\section{Appendix}
\subsection{Littlewood-Paley theory}
In this subsection, we recall some basic facts about the Littlewood-Paley theory. 
\subsubsection{Littlewood-Paley theory on $\mathbb{T}\times \R$}
Let us first recall some basic facts about the Littlewood-Paley theory on $\mathbb{T}\times \R$. Let $\Phi(x,y)$ and $\Phi_0(x,y)$ be two functions in $C^{\infty}(\mathbb{T}\times\R)$ such that their Fourier transform satisfy $\mathrm{supp}\, \widetilde{\Phi}\subset\left\{\xi=(\al,\eta):\, \f34\leq |\xi|\leq \f83\right\}$, $\mathrm{supp}\, \widetilde{\Phi}_0\subset\left\{\xi=(\al,\eta):\, |\xi|\leq \f43\right\}$ and $\widetilde{\Phi}_0(\xi)+\sum_{j\geq 1}\widetilde{\Phi}_j(\xi)=1$ with $\widetilde{\Phi}_{j}(\xi)=\widetilde{\Phi}(2^{-(j-1)}\xi)$, $j=1,2,...$. 

The Littlewood-Paley operators $\bigtriangleup_j\, (j\geq 0)$ on $\mathbb{T}\times \R$ defined by
\beno
\bigtriangleup_ju=\int_{\R}\sum_{\al}\widetilde{u}(\al,\eta)\widetilde{\Phi}_j(\al,\eta)e^{i\al x+i\eta y}d\eta={\Phi}_j\ast u. 
\eeno
Then the Berstein's inequality gives 
\beq\label{eq:be2D}
\|\bigtriangleup_ju\|_{L^{\infty}_{x,y}}\leq \|{\Phi}_j\|_{L^2_{x,y}}\sum_{|k-j|\leq 2}\|\bigtriangleup_ku\|_{L^2_{x,y}}\leq C2^j\sum_{|k-j|\leq 2}\|\bigtriangleup_ku\|_{L^2_{x,y}}.
\eeq

\subsubsection{Littlewood-Paley theory on $\mathbb{T}$}
Let us recall some basic facts about the Littlewood-Paley theory on $\mathbb{T}$. Let $\phi$ and $\phi_0$ be two functions in $C^{\infty}(\mathbb{T})$ such that $\mathrm{supp}\, \widehat{\phi}\subset\{\f34\leq |\xi|\leq \f83\}$, $\mathrm{supp}\, \widehat{\chi}\subset\{|\xi|\leq \f43\}$ and $\widehat{\chi}(\xi)+\sum_{j\geq 0}\widehat{\phi}(2^{-j}\xi)=1$. 

Then the Littlewood-Paley operators $\bigtriangleup_j, S_j, (j\geq 0)$ on $\mathbb{T}$ defined by
\beno
&&\bigtriangleup_ju=(\phi_j\ast u)(x)=\int_{\mathbb{T}}\phi_j(x-x_1)u(x_1)dx_1,\quad j\geq 0\\
&&S_ju=\sum_{\ell=-1}^{j-1}\bigtriangleup_\ell u=(\chi_j\ast u)(x),\quad \bigtriangleup_{-1}u=(\chi\ast u)(x)
\eeno
Here $\phi_j(x)=2^j\phi(2^jx)$ and $\|\chi_j\|_{L^2}\leq C2^{\f12j}$. 

Furthermore, we have the Bony's decomposition: $T_fg=\sum_{j\geq 1}S_{j-1}f\bigtriangleup_jg$ and $T^*_gf=fg-T_fg$. 

The following Berstein type inequalities will be used.
\ben
\label{eq: Ber1}&&\|\ln(e+|D_x|)T_fg\|_{L^2_x}+\|\ln(e+|D_x|)T^*_fg\|_{L^2_x}\leq C\|f\|_{L^{\infty}_x}\|\ln(e+|D_x|)g\|_{L^2_x},\\
\label{eq: Ber2}&&\|\ln(e+|D_x|)T_fg\|_{L^2_x}\leq C\|f\|_{L^{2}_x}\||D_x|^{\f12}\ln(e+|D_x|)g\|_{L^2_x},\\
\label{eq: Ber3}&&\|\ln(e+|D_x|)T_{\pa_xf}g\|_{L^2_x}\leq C\|f\|_{L^{\infty}_x}\|\ln(e+|D_x|)\pa_xg\|_{L^2_x}.
\een

Here we show the proof of \eqref{eq: Ber2}, \eqref{eq: Ber1} and \eqref{eq: Ber3} can be obtained by the same argument. Indeed, we have
\begin{align*}
&\|\ln(e+|D_x|)T_fg\|_{L^2_x}
=\|\ln(e+|D_x|)(\sum_{j\geq 1}S_{j-1}f\bigtriangleup_jg)\|_{L^2_x}\\
&\leq C\left(\sum_{k\geq -1}\|\langle k\rangle\bigtriangleup_k(\sum_{j\geq 1}S_{j-1}f\bigtriangleup_jg)\|_{L^2_x}^2\right)^{\f12}\\
&\leq C\left(\sum_{k\geq -1}\langle k\rangle\sum_{|j-k|\leq 2}\sup_{j\leq k+2}\|S_{j-1}f\|_{L^{\infty}}^2\|\bigtriangleup_jg\|_{L^2_x}^2\right)^{\f12}\\
&\leq C\left(\sum_{k\geq -1}\langle k\rangle\sum_{|j-k|\leq 2}2^j\sup_{j\leq k+2}\|S_{j-1}f\|_{L^{2}}^2\|\bigtriangleup_jg\|_{L^2_x}^2\right)^{\f12}\\
&\leq C\|f\|_{L^2}\left(\sum_{k\geq -1}\langle k\rangle2^k\|\bigtriangleup_kg\|_{L^2_x}^2\right)^{\f12}
\leq C\|f\|_{L^{2}_x}\||D_x|^{\f12}\ln(e+|D_x|)g\|_{L^2_x}.
\end{align*}
Details of the Littlewood-Paley theory on $\mathbb{T}$ or $\mathbb{T}\times\R$ as well as the Bony's decomposition can be found in \cite{BCD-book, Chemin-book, Danchin-note}.

\subsection{Functional inequalities}
In this subsection, we introduce some basic functional inequalities which are used in the proof. We start with the well-known Gagliardo-Nirenberg on $\R$ inequality (see \cite{Nirenberg}). Suppose $u\in \mathcal{S}(\R)$, then there exists a constant $C$ such that 
\beq\label{eq: GN}
\|u\|_{L^{\infty}(\R)}\leq C\|u\|_{L^{2}(\R)}^{\f12}\|\pa_yu\|_{L^{2}(\R)}^{\f12}. 
\eeq

We also introduce the Minkowski's integral inequality (see \cite{Stein}). Suppose that $(S_1,\mu_1)$ and $(S_2,\mu_2)$ are two $\sigma$-finite measure spaces and $F(x,y):\, S_1\times S_2\to \R$ is measurable. Then it holds for $p>1$ that
\beq\label{eq:M}
\begin{split}
\|F\|_{L^p(d\mu_2, L^1(d\mu_1))}
&\eqdef\left(\int_{S_2}\left|\int_{S_1}F(x,y)d\mu_1(x)\right|^pd\mu_2(y)\right)^{\f1p}\\
&\leq \int_{S_1}\left(\int_{S_2}\left|F(x,y)\right|^pd\mu_2(y)\right)^{\f1p}d\mu_1(x)\eqdef \|F\|_{L^1(d\mu_1, L^p(d\mu_2))}.
\end{split}
\eeq

We end this subsection by introducing the discrete Schur test. Let $K(j,j')$ be the non-negative function defined on $\mathbb{N}^2$ and 
\beno
T(f)(j)=\sum_{j'\in \mathbb{N}}K(j,j')f(j').
\eeno
Then if there exists a constant $C>0$ such that the kernel $K(j,j')$ satisfies 
\beno
\sup_{j\geq 0}\sum_{j'\in \mathbb{N}}K(j,j')\leq C,
\quad \sup_{j'\geq 0}\sum_{j\in \mathbb{N}}K(j,j')\leq C.
\eeno
Then it holds that,
\beq\label{eq: Schur}
\left|\sum_{j\in \mathbb{N}} T(f)(j)g(j)\right|\leq C\|f\|_{l^2}\|g\|_{l^2}
\eeq
\begin{proof}
We only need to prove that $\|T(f)\|_{l^2}\leq C\|f\|_{l^2}$. Using the Cauchy–Schwarz inequality, we have
\begin{align*}
|T(f)(j)|^2=\left|\sum_{j'\in \mathbb{N}}K(j,j')f(j')\right|^2
\leq \left(\sum_{j'\in \mathbb{N}}K(j,j')\right)\left(\sum_{j'\in \mathbb{N}}K(j,j')f(j')^2\right),
\end{align*}
and then by the Fubini's theorem, we get 
\begin{align*}
\|T(f)\|_{l^2}^2
&\leq \sum_{j\in \mathbb{N}}\left(\sum_{j'\in \mathbb{N}}K(j,j')\right)\left(\sum_{j'\in \mathbb{N}}K(j,j')f(j')^2\right)\\
&\leq \left(\sup_{j\in \mathbb{N}}\sum_{j'\in \mathbb{N}}K(j,j')\right)\left(\sup_{j'\in \mathbb{N}}\sum_{j\in \mathbb{N}}K(j,j')\right)\|f\|_{l^2}^2\leq C\|f\|_{l^2}^2.
\end{align*}
Thus we proved \eqref{eq: Schur}. 
\end{proof}

\subsection{Regularization estimate}
In this subsection, we show the local in time estimates and  regularization of the viscosity term. 
\begin{lemma}
Let $\beta=\f12+\f12\ep$ with $\ep>0$. 
Let $\om$ be the solution of \eqref{eq:NS3} with initial data $\om_{in}$ satisfying $\|\om_{in}\|_{L^2}\leq \nu^{\beta}$, then there exist $T>0$ independent $\nu$ such that for any $t\leq T$, 
\beno
\||D|^{\ep}\om(t)\|_{L^2_{x,y}}\leq C(t\nu)^{-\ep/2}\|\om_{in}\|_{L^2}.
\eeno
\end{lemma}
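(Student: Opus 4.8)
The plan is to establish the parabolic smoothing estimate $\||D|^{\ep}\om(t)\|_{L^2}\leq C(t\nu)^{-\ep/2}\|\om_{in}\|_{L^2}$ by a time-weighted energy method applied to the full nonlinear vorticity equation \eqref{eq:NS2}. The driving mechanism is the viscous dissipation $-\nu\Delta\om$: starting from $L^2$ data, the heat-type regularization buys $\ep$ derivatives at the cost of the factor $(t\nu)^{-\ep/2}$, and the whole point of restricting to $t\le T$ with $T$ independent of $\nu$, together with the smallness $\|\om_{in}\|_{L^2}\le\nu^{\beta}$ with $\beta=\tfrac12+\tfrac12\ep$, is to absorb the nonlinear contribution into the linear gain.

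First I would set up the time-weighted quantity $Y(t)\eqdef (t\nu)^{\ep}\||D|^{\ep}\om(t)\|_{L^2_{x,y}}^2$ (or work with the equivalent Fourier-side weight $(\al^2+\eta^2)^{\ep}$ together with the prefactor $(t\nu)^{\ep}$). Differentiating in $t$, the transport term $y\pa_x\om$ is skew-adjoint in $L^2$ and does not contribute to the $L^2$ energy balance at fixed regularity, though it commutes nontrivially with $|D|^{\ep}$; the cleanest route is to note that the conjugated variable $W(t,x,y)=\om(t,x+yt,y)$ removes this term exactly as in the proof of Lemma~\ref{Lem:lin-enha}, reducing the linear part to a pure (sheared) heat equation for which the gain of $\ep$ derivatives with weight $(t\nu)^{-\ep/2}$ is a standard maximal-regularity computation. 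The time derivative of $Y$ then produces a good term $-2\nu(t\nu)^{\ep}\||D|^{\ep}\na\om\|_{L^2}^2$ from the viscosity, a term $+\ep\nu\,t^{-1}(t\nu)^{\ep}\||D|^{\ep}\om\|_{L^2}^2$ from differentiating the weight (which is controlled by the dissipation term by Young's inequality since the weight contributes at the scale of one derivative), and the nonlinear term $-(t\nu)^{\ep}\langle |D|^{\ep}(V\cdot\na\om),|D|^{\ep}\om\rangle$.

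The main obstacle is bounding the nonlinear term, so I would concentrate effort there. Using the divergence-free structure $V\cdot\na\om=\na\cdot(V\om)$ and the fractional Leibniz (Kato--Ponce) rule, one estimates $\||D|^{\ep}(V\om)\|_{L^2}$ by $\|V\|_{L^\infty}\||D|^{\ep}\om\|_{L^2}+\||D|^{\ep}V\|_{L^{p}}\|\om\|_{L^{q}}$ for a Hölder pair, then pairs against $|D|^{\ep}\na\om$ to exploit the dissipation. Since $V=\na^{\bot}(-\Delta)^{-1}\om$ is one order smoother than $\om$, the factors involving $V$ cost no more than $\ep$ derivatives of $\om$, and every resulting term can be bounded, after Young's inequality, by a small multiple of the dissipation $\nu(t\nu)^{\ep}\||D|^{\ep}\na\om\|_{L^2}^2$ times a power of $Y(t)$, plus lower-order pieces. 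Collecting terms gives a differential inequality of the schematic form
\begin{align*}
\frac{d}{dt}Y(t)+\nu(t\nu)^{\ep}\||D|^{\ep}\na\om(t)\|_{L^2}^2
\le C\,\nu^{-1}Y(t)^{2}+\text{(controlled lower-order terms)},
\end{align*}
where the $\nu^{-1}$ reflects the single power of $\nu$ lost when the nonlinearity is absorbed via Young against the $\nu$-weighted dissipation. The smallness budget $\|\om_{in}\|_{L^2}^2\le\nu^{2\beta}=\nu^{1+\ep}$ is exactly calibrated so that $\nu^{-1}Y(0^+)\lesssim\nu^{\ep}$ stays bounded, and a continuity (bootstrap) argument on $[0,T]$ then closes the estimate for $T$ small but independent of $\nu$, yielding $Y(t)\le C\|\om_{in}\|_{L^2}^2$ and hence the claimed bound. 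One technical point to handle carefully is the interaction of $|D|^{\ep}$ with the shear in $W$; if one prefers to work directly with $\om$ rather than $W$, the commutator $[|D|^{\ep},y\pa_x]$ must be estimated, but this commutator is of order $\ep$ in $y$-frequency and is again absorbable by the dissipation, so it does not affect the final inequality.
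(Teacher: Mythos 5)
Your route --- differentiating the weighted quantity $Y(t)=(t\nu)^{\ep}\||D|^{\ep}\om(t)\|_{L^2_{x,y}}^2$ and closing a differential inequality --- is genuinely different from the paper's. The paper works with the mild formulation: it reads the smoothing bound $\|(t\nu|D|^2)^{\ep/2}\widetilde{S}(t,s)f\|_{L^2}\leq C\|f\|_{L^2}$ directly off the explicit Fourier representation of the sheared heat propagator, writes $\om(t)=\widetilde{S}(t,0)\om_{in}-\int_0^t\widetilde{S}(t,s)(V\cdot\na\om)(s)\,ds$, and bounds the Duhamel term by $\int_0^t\|V\|_{L^\infty}\|\na\om\|_{L^2}\,ds\leq\big(\int_0^T(s\nu)^{-\ep}ds\big)^{1/2}\|\na\om\|_{L^2_sL^2}\,\sup_s(s\nu)^{\ep/2}\|\om(s)\|_{H^{\ep}}$, closing with the enstrophy bound $\nu^{1/2}\|\na\om\|_{L^2_tL^2}\leq\|\om_{in}\|_{L^2}$ and the budget $\nu^{-\ep/2}\nu^{-1/2}\|\om_{in}\|_{L^2}\leq 1$. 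Your smallness accounting is consistent with this, but your energy-method version contains a genuine gap.

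The problem is the term produced by differentiating the time weight, $\ep\,\nu(t\nu)^{\ep-1}\||D|^{\ep}\om\|_{L^2}^2$. Your claim that it is ``controlled by the dissipation term by Young's inequality'' fails on the low frequencies $|\xi|\leq(t\nu)^{-1/2}$: there $\nu(t\nu)^{\ep-1}|\xi|^{2\ep}$ can be as large as $t^{-1}$, while both available dissipation terms, $\nu(t\nu)^{\ep}|\xi|^{2\ep+2}$ and $\nu|\xi|^2$, are smaller by the factor $t\nu|\xi|^2\leq 1$; no splitting of exponents repairs this (interpolating $\||D|^{\ep}\om\|$ between $\|\om\|$ and $\||D|^{1+\ep}\om\|$ and applying Young leaves a remainder of order $t^{-1}\|\om\|_{L^2}^2$ or worse). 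Since $Y(0)=0$, you must integrate your differential inequality from $t=0$, and $\int_0^T t^{-1}\,dt$ diverges, so the argument as written does not close. This is the classical obstruction to proving parabolic smoothing by differentiating a bare power-weighted norm, and it is exactly what the Duhamel representation sidesteps. The fix is routine but must be stated: either observe that on $|\xi|\leq(t\nu)^{-1/2}$ one has $(t\nu)^{\ep}|\xi|^{2\ep}\leq1$ so the desired bound there is just enstrophy conservation, and run the weighted identity only on the complementary (time-dependent) frequency block, tracking the flux across the moving cutoff; or replace $(t\nu|\xi|^2)^{\ep}$ by a bounded multiplier such as $\min(1,t\nu|\xi|^2)^{\ep}$; or simply pass to the mild formulation as the paper does. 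A secondary caution: your Kato--Ponce step needs $\|V\|_{L^\infty}$, which in 2D is not controlled by $\|\om\|_{L^2}$ alone; for the nonzero modes you must spend the $H^{\ep}$ norm (with its $(t\nu)^{-\ep/2}$ weight), which is precisely where the hypothesis $\beta=\f12+\f12\ep$ is consumed.
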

\begin{proof}
Recall that from the linearized equation, we get
\begin{align*}
|\widetilde{\om}(t,\al,\eta)|
\leq e^{-c\al^2\nu t^3-c\al^2\nu t-c\eta^2\nu t}|\widetilde{\om}_{in}(\al,\eta+\al t)|,
\end{align*}
which gives 
\beno
\|(t\nu|D|^2)^{\ep/2}\om(t)\|_{L^2_{x,y}}+\nu^{\f12}\|(t\nu|D|^2)^{\ep/2}\na\om(t)\|_{L^2([0,\infty),L^2_{x,y})}\leq C\|\om_{in}\|_{L^2_{x,y}}.
\eeno
Thus 
\beno
\om(t)=\widetilde{S}(t,0)\om_{in}-\int_0^t\widetilde{S}(t,s)(V\cdot\na \om)(s)ds,
\eeno
with 
\beno
\|(t\nu|D|^2)^{\ep/2}\widetilde{S}(t,s)f\|_{L^2_{x,y}}
\leq C\|f\|_{L^2_{x,y}}.
\eeno
Therefore by using the fact that $\nu^{\f12}\|\na \om\|_{L^2_tL^2}\leq C\|\om_{in}\|_{L^2}$,  we get,
\begin{align*}
&\sup_{t\in[0,T]}\|(t\nu|D|^2)^{\ep/2}\om(t)\|_{L^2_{x,y}}\\
&\leq C\sup_{t\in[0,T]}\|(t\nu|D|^2)^{\ep/2}\widetilde{S}(t,0)\om_{in}\|_{L^2_{x,y}}
+C\sup_{t\in[0,T]}\int_0^t\|V\na\om(s)\|_{L^2}ds\\
&\leq C\|\om_{in}\|_{L^2}+\Big(\int_0^T(s\nu)^{-\ep}ds\Big)^{\f12}\|\na \om(s)\|_{L^2_sL^2}\sup_{s\in[0,T]}\|\om(s)\|_{H^{\ep}}(s\nu)^{\ep/2}\\
&\leq C\|\om_{in}\|_{L^2}\left(1+T^{\f12-\f{\ep}{2}}\nu^{-\f{\ep}{2}}\nu^{-1/2}\sup_{t\in[0,T]}\|(t\nu|D|^2)^{\ep/2}\om(t)\|_{L^2_{x,y}}\right).
\end{align*}
By the assumption $\|\om_{in}\|_{L^2}\leq \nu^{\f{1+\ep}{2}}$, we get that there is $T>0$, so that $CT^{\f12-\f{\ep}{2}}\leq \f12$ and then
\beno
\sup_{t\in[0,T]}\|(t\nu|D|^2)^{\ep/2}\om(t)\|_{L^2_{x,y}}\leq C\|\om_{in}\|_{L^2}.
\eeno
Thus we proved the lemma. 
\end{proof}

\begin{lemma}
Let $\om$ be the solution of \eqref{eq:NS3} with initial data $\om_{in}$ satisfying $\|\om_{in}\|_{L^2}\leq \f{\nu^{1/2}}{|\ln\nu|}$, then there exist $T>0$ independent $\nu$ such that for any $t\leq T$, 
\beno
\|\ln(|D|+e)\om(t)\|_{L^2_{x,y}}\leq C|\ln ((\nu t)^{-1}+e)|\|\om_{in}\|_{L^2}.
\eeno
\end{lemma}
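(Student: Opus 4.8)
The plan is to run a self-consistent estimate on the weighted quantity
$N(t)=\sup_{0<s\le t}M(s)/L(s)$, where $M(s)=\|\ln(|D|+e)\om(s)\|_{L^2_{x,y}}$ and $L(s)=\ln((\nu s)^{-1}+e)$, and to show $N(t)\le C\|\om_{in}\|_{L^2}$ once $T$ is chosen small (independent of $\nu$) and $\nu$ is small. The starting point is the Duhamel formula $\om(t)=\widetilde S(t,0)\om_{in}-\int_0^t\widetilde S(t,s)(V\cdot\na\om)(s)\,ds$ from the previous lemma, with the $x$-zero mode, which solves a heat equation, treated analogously.

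The key linear ingredient is a logarithmic smoothing estimate $\|\ln(|D|+e)\widetilde S(t,s)f\|_{L^2}\le C\,L(t-s)\,\|f\|_{L^2}$. To prove it I would use the pointwise Fourier bound already established: at output frequency $(\al,\eta)$ the symbol of $\widetilde S(t,s)$ carries a factor $e^{-c\nu(t-s)(\al^2+\eta^2)}$ (drop the $(t-s)^3$ term, which only helps) together with the measure-preserving shift $\eta\mapsto\eta+\al(t-s)$ on the data. The estimate then reduces to the elementary multiplier bound $\sup_{\xi}\ln(|\xi|+e)e^{-a|\xi|^2}\le C\ln(a^{-1}+e)$, obtained by splitting at $|\xi|=a^{-1/2}$, applied with $a=c\nu(t-s)$. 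In particular $\|\ln(|D|+e)\widetilde S(t,0)\om_{in}\|_{L^2}\le CL(t)\|\om_{in}\|_{L^2}$, which both bounds the linear term and shows $\limsup_{s\to0}M(s)/L(s)<\infty$; combined with parabolic smoothing ($M(s)<\infty$ and $M/L$ continuous for $s>0$) this gives the a priori finiteness of $N$ needed to launch the bootstrap.

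For the nonlinear term I would exploit that the log-regularity is exactly what restores the $L^\infty$ control of the velocity lost in the borderline $2$D embedding: by Littlewood--Paley and $|\widehat V|\sim|\xi|^{-1}|\widehat\om|$, one has $\|V\|_{L^\infty_{x,y}}\le C\sum_{j\ge0}\|\bigtriangleup_j\om\|_{L^2}\le C\|\ln(|D|+e)\om\|_{L^2}=CM$, using Cauchy--Schwarz against $\sum_j\langle j\rangle^{-2}<\infty$. Hence $\|(V\cdot\na\om)(s')\|_{L^2}\le M(s')\|\na\om(s')\|_{L^2}$. Inserting the smoothing estimate, the bound $M(s')\le N(s)L(s')$, then Cauchy--Schwarz in $s'$ together with the enstrophy estimate $\|\na\om\|_{L^2_{s'}L^2}\le(2\nu)^{-1/2}\|\om_{in}\|_{L^2}$, the nonlinear contribution to $M(s)$ is controlled by $CN(s)\big(\int_0^sL^2(s-s')L^2(s')\,ds'\big)^{1/2}\nu^{-1/2}\|\om_{in}\|_{L^2}$. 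A direct computation (split at $s'=s/2$ and use $\int_0^a|\ln w|^2\,dw\lesssim a|\ln a|^2$) gives $\int_0^sL^2(s-s')L^2(s')\,ds'\le CsL(s)^4$, so the nonlinear part of $M(s)/L(s)$ is at most $CN(s)\,s^{1/2}L(s)\,\nu^{-1/2}\|\om_{in}\|_{L^2}$.

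Here the hypothesis $\|\om_{in}\|_{L^2}\le\nu^{1/2}/|\ln\nu|$ enters decisively: it converts $\nu^{-1/2}\|\om_{in}\|_{L^2}$ into $|\ln\nu|^{-1}$, and since $L(s)=|\ln\nu|+|\ln s|$ one gets $s^{1/2}L(s)/|\ln\nu|\le T^{1/2}(1+|\ln T|)$ for $s\le T\le 1$ and $\nu$ small. Choosing $T$ so small that $CT^{1/2}(1+|\ln T|)\le\tfrac12$ absorbs the nonlinear term into $N(t)$ and yields $N(t)\le 2C\|\om_{in}\|_{L^2}$, which is the claim. I expect the main obstacle to be precisely this last bookkeeping: propagating the logarithmic weight through the time convolution while losing no more than the single power of $|\ln\nu|$ that the smallness of the data can pay for. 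This is unavoidable because the $L^\infty$ bound on $V$ must be closed against the very quantity $M$ being estimated (the failure of $H^1\hookrightarrow L^\infty$ in two dimensions), rather than against a fixed lower norm, forcing the weighted, self-referential structure of the argument.
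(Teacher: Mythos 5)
Your proof is correct and follows essentially the same route as the paper: Duhamel against the linearized propagator, a logarithmic smoothing estimate $\|\ln(|D|+e)\widetilde{S}(t,s)f\|_{L^2}\le C\ln((\nu(t-s))^{-1}+e)\|f\|_{L^2}$ (the paper obtains it by cutting at $|\al|\le(\nu t)^{-1}$, you by the equivalent pointwise multiplier bound $\ln(|\xi|+e)e^{-a|\xi|^2}\le C\ln(a^{-1}+e)$ plus the measure-preserving frequency shift), the borderline embedding $\|V\|_{L^\infty}\le C\|\ln(|D|+e)\om\|_{L^2}$, the enstrophy bound $\nu^{1/2}\|\na\om\|_{L^2_tL^2}\le C\|\om_{in}\|_{L^2}$, and absorption using $\|\om_{in}\|_{L^2}\le\nu^{1/2}/|\ln\nu|$ with $T$ small. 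The only substantive deviation is that you keep the $(t-s)$-dependent logarithm inside the Duhamel integral and bound the convolution $\int_0^s L^2(s-s')L^2(s')\,ds'\le CsL(s)^4$, which is in fact the careful version of the paper's uniform-in-$s$ claim $\|\ln(|D|+e)\widetilde{S}(t,s)f\|_{L^2}\le C\ln((\nu t)^{-1}+e)\|f\|_{L^2}$ (whose smoothing window is really $t-s$, not $t$), so your bookkeeping closes that step cleanly.
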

\begin{proof}
Recall that from the linearized equation, we get
\begin{align*}
|\widetilde{\om}(t,\al,\eta)|
\leq e^{-c\al^2\nu t^3-c\al^2\nu t-c\eta^2\nu t}|\widetilde{\om}_{in}(\al,\eta+\al t)|.
\end{align*}
Let $\chi$ be the smooth function support in $|\al|\leq (\nu t)^{-1}$, 
then we get
\begin{align*}
\|\ln(|D|+e)\chi(D)\om(t)\|_{L^2_{x,y}}+\nu^{\f12}\|\ln(|D|+e)\chi(D)\na\om(t)\|_{L^2([0,\infty),L^2_{x,y})}\leq C|\ln\nu|\|\om_{in}\|_{L^2_{x,y}},
\end{align*}
and
\begin{align*}
&\left\|{\ln(|D|+e)}(1-\chi(D))\om(t)\right\|_{L^2_{x,y}}\\
&\quad+\nu^{\f12}\|\ln(|D|+e)(1-\chi(D))\na\om(t)\|_{L^2([0,\infty),L^2_{x,y})}\\
&\leq C{\ln((\nu t)^{-1}+e)}\bigg(\left(\sum_{\al}\left\|\f{\ln(|\al|+e)}{\ln((t\nu)^{-1}+e)}(1-\chi(\al))e^{-\nu\al^2t}\widetilde{\om}_{in}\right\|_{L^2_{\eta}}^2\right)^{\f12}\\
&\quad+\left(\sum_{\al}\left\|\f{\ln(|\al|+e)}{\ln((t\nu)^{-1}+e)}(1-\chi(\al))\nu^{\f12}(|\al|+|\eta|)e^{-\nu\al^2t-\nu\eta^2 t}\widetilde{\om}_{in}\right\|_{L^2_{\eta}L^2_t}^2\right)^{\f12}\bigg)
\end{align*}
by the fact that $\f{\ln(|\al|+e)}{\ln((t\nu)^{-1}+e)}(1-\chi(\al))e^{-\nu\al^2t}\leq C\al e^{-\al}\leq C$ for $\nu t\geq \al^{-1}$, we get 
\beno
\|\ln(|D|+e)\om(t)\|_{L^2_{x,y}}+\nu^{\f12}\|\ln(|D|+e)\na\om(t)\|_{L^2([0,\infty),L^2_{x,y})}\leq C\ln((\nu t)^{-1}+e)\|\om_{in}\|_{L^2_{x,y}},
\eeno
for the solution of the linearized equation.

Thus 
\beno
\om(t)=\widetilde{S}(t,0)\om_{in}-\int_0^t\widetilde{S}(t,s)(V\cdot\na \om)(s)ds,
\eeno
with 
\beno
\left\|\f{\ln(|D|+e)}{\ln((\nu t)^{-1}+e)}\widetilde{S}(t,s)f\right\|_{L^2_{x,y}}
\leq C\|f\|_{L^2_{x,y}}.
\eeno
Therefore by using the fact that $\nu^{\f12}\|\na \om\|_{L^2_tL^2_{x,y}}\leq C\|\om_{in}\|_{L^2_{x,y}}$,  we get,
\begin{align*}
&\sup_{t\in[0,T]}\left\|\f{\ln(|D|+e)}{\ln((\nu t)^{-1}+e)}\om(t)\right\|_{L^2_{x,y}}\\
&\leq C\sup_{t\in[0,T]}\left\|\f{\ln(|D|+e)}{\ln((\nu t)^{-1}+e)}\widetilde{S}(t,0)\om_{in}\right\|_{L^2_{x,y}}
+C\sup_{t\in[0,T]}\int_0^t\|V\na\om(s)\|_{L^2}ds\\
&\leq C\|\om_{in}\|_{L^2}+\|V\|_{L^2L^{\infty}}\|\na\om\|_{L^2L^2}\\
&\leq C\|\om_{in}\|_{L^2}+C\nu^{-\f12}\|(\ln(|D|+e))\om(s)\|_{L^2_tL^2_{x,y}}\|\om_{in}\|_{L^2}\\
&\leq C\|\om_{in}\|_{L^2}+C\nu^{-\f12}T^{\f12}\ln((\nu T)^{-1}+e))\|\om_{in}\|_{L^2}^2
\end{align*}
By the assumption $\|\om_{in}\|_{L^2}\leq \f{\nu^{1/2}}{|\ln\nu|}$, we get that there is $T>0$, so that $CT^{\f12}\f{\ln((\nu T)^{-1}+e))}{\ln(\nu^{-1}+e))}\leq \f12$ and then
\beno
\sup_{t\in[0,T]}\left\|\f{\ln(|D|+e)}{\ln((\nu t)^{-1}+e)}\om(t)\right\|_{L^2_{x,y}}\leq C\|\om_{in}\|_{L^2}.
\eeno
Thus we proved the lemma. 
\end{proof}

\end{document}